\theoremstyle{plain} 
\newtheorem{theorem}[subsection]{Theorem}
\newtheorem{proposition}[subsection]{Proposition}
\newtheorem{lemma}[subsection]{Lemma}
\newtheorem{corollary}[subsection]{Corollary}
\theoremstyle{definition}
\newtheorem{example}[subsection]{Example}
\newtheorem{definition}[subsection]{Definition}
\theoremstyle{remark}
\newtheorem{remark}[subsection]{Remark}
\newtheorem*{remark*}{Remark}
\numberwithin{equation}{subsection}
\DeclareMathOperator{\cone}{{\mathrm{cone}}}
\definecolor{Mcolor}{rgb}{0,0,1}
\definecolor{Wcolor}{rgb}{1,0,0}
\definecolor{lightgray}{rgb}{0.6,0.6,0.6}
\begin{document}

\title[Twisted Weak Orders of Coxeter Groups]{Twisted Weak Orders of Coxeter Groups}

\author{Weijia Wang}
\address{School of Mathematics (Zhuhai)
\\ Sun Yat-sen University \\
Zhuhai, Guangdong, 519082 \\ China}
\email{wangweij5@mail.sysu.edu.cn}
\date{\today}

\begin{abstract}
In this paper, we initiate the study of the twisted weak order associated to a twisted Bruhat order for a Coxeter group and explore the relationship between the lattice property of such orders and the infinite reduced words. We show that for a 2 closure biclosed set $B$ in $\Phi^+$, the $B$-twisted weak order is a non-complete meet semilattice if $B$ is the inversion set of an infinite reduced word and that the converse also holds in the case of affine Weyl groups.
\end{abstract}

\maketitle

\section{Introduction}

Twisted Bruhat order on a Coxeter group was first introduced by Dyer. Such an order generalizes the usual Bruhat order. They are of interest for their own combinatorics and have  connections (some conjecturally) with the reflection orders, the geometries of the root systems and the representation theory. For example, the alcove order of the affine Weyl groups studied by Lusztig is a twisted Bruhat order and in \cite{DyHS2} Kazhdan-Lusztig polynomial is defined for certain intervals of the twisted Bruhat order and therefore such an order relates  to the composition factors of the Verma modules. In \cite{Gobet},  twisted Bruhat orders and  biclosed sets of the positive roots are used in the study of the twisted filtration of the Soergel bimodules and the generalized positivity conjecture of the Hecke algebra.

To every ordinary Bruhat order one can associate a weak order, which encodes the important combinatorics of the underlying Coxeter group.
 In the same spirit, we construct the natural twisted weak order associated to any given twisted Bruhat order. Such an order is a generalization of the ordinary weak order and occurs naturally in many constructions, e.g. limit weak order of the infinite reduced words.
We show that several definitions of the twisted weak order can be given and establish their equivalence.
 The main result of the paper deals with the relationship among the infinite reduced words, the geometries of the root systems and the lattice property of the twisted weak order.
We prove that if a biclosed set (resp. biconvex set) in $\Phi^+$ is the inversion set of an infinite reduced word then the corresponding twisted weak order is a non-complete lattice and for the affine Weyl groups the converse also holds. Finding a characterization of the biclosed sets which arise from the inversion sets is an interesting question and is discussed in \cite{Inversion}. Our result provides a new perspective which uses the twisted weak order.

\section{Preliminaries}

\subsection{Closure Operators on The Root Systems}

We refer the readers to \cite{bjornerbrenti} for the basic notions of Coxeter groups and the root systems. With the canonical bijection between the simple reflections and the simple roots, we denote the simple root corresponding to the simple reflection $s$ by $\alpha_s$. We denote the set of simple reflections by $S$ and the set of reflections by $T$. In this paper, we consider two types of closure operators: 2 closure operator and cone closure operator on the root system $\Phi$ of a Coxeter group $W$.
The 2 closure operator is studied for its relevance with reflection orders, description of the join and the meet under the weak order and the completion of the weak order. A subset $\Gamma$ of $\Phi$ is said to be 2 closure closed if for any $\alpha,\beta\in \Gamma$, the set $\{k_1\alpha+k_2\beta|k_1,k_2\in \mathbb{R}_{\geq 0}\}\cap \Phi$ is still contained in $\Gamma$. Under this operator, the closure of any set is the intersection of all 2 closure closed sets containing it.
Cone closure operator is the natural oriented matroidal closure operator on $\Phi$. A subset $\Gamma$ of $\Phi$ is said to be cone closure closed if for any finite number of roots $\alpha_1,\alpha_2,\cdots,\alpha_t\in \Gamma$, the set
$\{\sum_{i=1}^tk_i\alpha_i|k_i\in \mathbb{R}_{\geq 0}\}\cap \Phi$ is still contained in $\Gamma$. Similarly the cone closure of any set is the intersection of all cone closure closed sets containing it.

 A set $\Lambda\subset \Gamma(\subset \Phi)$ with the property that both $\Lambda$ and $\Gamma\backslash \Lambda$ are 2 closure closed (resp. cone closure closed) is called a 2 closure biclosed set (resp. cone closure biclosed set) in $\Gamma$. Clearly a 2 closure biclosed set in $\Gamma$ is also cone closure biclosed but not vice versa. A 2 closure biclosed (resp. cone closure) set $H$ in $\Phi$ such that $\Phi\backslash H=-H$ is called a 2 closure hemispace (resp. cone closure hemispace).
For example $\Phi^+$ is both a 2 closure hemispace and a cone closure hemispace.
One can easily verify that there exists a bijection between the set of  2 closure biclosed sets in $\Phi^+$ and the set of 2 closure hemispaces. To make it explicit, given a 2 closure biclosed set $B$ in $\Phi^+$ we can associate a 2 closure hemispace $B\uplus -(\Phi^+\backslash B)$ where $\uplus$ denotes the disjoint union. (To see that $B\uplus -(\Phi^+\backslash B)$ is indeed 2 closure biclosed in $\Phi$, it suffices to show that it is 2 closure closed in $\Phi$ as its complement in $\Phi$ is $(\Phi^+\backslash B)\uplus -B$. Note that $B$ and $-(\Phi^+\backslash B)$ are both closed since $B$ is biclosed in $\Phi^+$. Now take $\alpha\in B$ and $\beta\in \Phi^+\backslash B$. If $k_1\alpha-k_2\beta=\gamma\in -B, k_1, k_2\geq 0$, then $\beta=\frac{k_1}{k_2}\alpha-\frac{1}{k_2}\gamma\in B$ as $B$ is closed. A contradiction. If $k_1\alpha-k_2\beta=\gamma\in \Phi^+\backslash B, k_1,k_2>0$, then $\frac{1}{k_1}\gamma+\frac{k_2}{k_1}\beta=\alpha\in \Phi^+\backslash B$ as $\Phi^+\backslash B$ is closed. A contradiction.) Given a 2 closure hemispace $H$ we can associate to it a 2 closure biclosed set in $\Phi^+$: $H\cap \Phi^+.$ In fact such a bijection also exists if one replace $\Phi^+$ with any 2 closure hemispace $H$.  But there is in general no such bijection for the cone closure operator. (There could exist a cone closure biclosed set $B$ in $\Phi^+$ such that $B\uplus -(\Phi^+\backslash B)$ is not cone closure closed.) There is a natural $W$ action on the set of 2 closure hemispaces (i.e. $wH=\{w(\alpha)|\alpha\in H\}$ for $H$ a hemispace) and thus can be transferred to the set of all 2 closure biclosed sets in $\Phi^+$. We denote such an action by $w\cdot B$ for $w\in W$ and $B$ a biclosed set in $\Phi^+$ (The formula of this action is given by $w\cdot B=(\Phi_w\backslash w(-B))\cup (w(B)\backslash -\Phi_w)$ where $\Phi_w:=\{\alpha\in \Phi^+|w^{-1}(\alpha)\in \Phi^-\}$ is called an inversion set). Note that such an action is certainly different from the natural $W$ action on a set $B$ which we denote $wB$. A cone closure biclosed set in $\Phi^+$ is also called a biconvex set in $\Phi^+$ in the literature.

It is known that any finite 2 closure (resp. cone closure) biclosed set in $\Phi^+$ is an inversion set $\Phi_x$ for some $x\in W$. For a proof see \cite{DyerWeakOrder} Lemma 4.1(d). Note that if $s_1s_2\cdots s_k$ is a reduced expression of $x$, then $\Phi_x=\cup_{i=1}^k\Phi_{s_1s_2\cdots s_i}$.

For any subset $\Gamma$ of a real vector space define the cone spanned by $\Gamma$ to be $\{\sum_{i\in I}k_iv_i|v_i\in \Gamma\cup\{0\}, k_i\in \mathbb{R}_{\geq 0}, |I|<\infty\}$ and denote it by $\cone(\Gamma)$.
A subset $\Gamma$ of $\Phi^+$ is called separable if the $\cone(\Gamma)\cap\cone(\Phi^+\backslash \Gamma)=\{0\}$. A separable set is 2 closure and cone closure biclosed in $\Phi^+$.

To end this subsection, we make a remark on the differences between the 2 closure biclosed sets and the cone closure biclosed sets (in $\Phi^+$) and on the role played by the notion of  (2 closure and cone closure) biclosed sets and  2 closure hemispaces in this paper. The 2 closure operator is finer than the cone closure operator in the sense that there are more closed sets under the 2 closure operator. The cone closure biclosed sets in $\Phi^+$ are all 2 closure biclosed. In \cite{edgar}, it is shown that the twisted Bruhat order can be defined for all 2 closure biclosed sets. In this paper, we twist the weak (right) order by 2 closure biclosed sets. Since the family of the cone closure biclosed sets is a subset of the family of the 2 closure biclosed sets,  cone closure biclosed sets can certainly be used to twist the weak order and therefore we also study the relationship between the poset properties of such twisted weak orders and the cone closure biclosed sets which are used. Finally, since there is a bijection between the set of 2 closure biclosed sets in $\Phi^+$ and the set of 2 closure hemispaces, we shall use   2 closure hemispaces as a tool in this paper to obtain certain properties of the twisted weak order. For more comparisons of these two closure operators see \cite{closurecmp}.

\subsection{Order And Lattice Theory}

Let $L$ be a partially ordered set. $L$ is a meet (resp. join) semilattice if
any two elements $x,y\in L$ admit a greatest lower bound
(called meet) (resp. least upper bound (called join)), denoted by $x\wedge y$ (resp. $x\vee y$). $L$ is called a complete meet (resp. join)
semilattice if any subset $A\subset L$ admits a greatest
lower bound (called meet) (resp. least upper bound (called join)), denoted by $\bigwedge A$ (resp. $\bigvee A$). If $L$ is both a meet semilattice and a
join semilattice, $L$ is called a lattice.

\subsection{Weak Order} Let $(W,S)$ be a Coxeter system and $w\in W$. $u\in W$ is called a (left) prefix of $W$ if $w$ has a reduced expression $s_1s_2\cdots s_t$ and $u=s_1s_2\cdots s_i,i\leq t$.   Define a partial order on $W$ such that $x\leq y$ if and only if $x$ is a prefix of $y$. It can be shown that $x\leq y$ if and only if $\Phi_x\subset \Phi_y.$ Such order is called the weak (right Bruhat) order. Under the weak order, $W$ is a complete meet semilattice and in particular when $W$ is finite, it is a complete lattice. For these facts see Chapter 3 of \cite{bjornerbrenti}.

\subsection{Infinite Reduced Words}

Let $(W,S)$ be a Coxeter system with $W$ being infinite.
A sequence $s_1s_2s_3\cdots, s_i\in S$ is called an infinite reduced word of $W$ if $s_1s_2\cdots s_j$ is a reduced expression for any $j$. Let $x=s_1s_2\cdots$ be an infinite reduced word. Define the inversion set $\Phi_x=\cup_{i=1}^{\infty}\Phi_{s_1s_2\cdots s_i}$. Two infinite reduced words are considered equal if their inversion sets coincide. One sees readily that the inversion set of an infinite reduced word is 2 closure and cone closure biclosed in $\Phi^+$. In fact, they are separable. See \cite{Inversion}. A prefix of an infinite reduced word $w$ is an element $u\in W$ such that $\Phi_u\subset \Phi_w.$

\subsection{Root System And Biclosed Sets For Affine Weyl Groups}\label{classificationsection}

The root system of an (irreducible) affine Weyl group $\widetilde{W}$ can be realized as the ``loop extension" of the root system of the corresponding finite irreducible Weyl group $W$. Let $V$ be a real Euclidean space with inner product $(-,-)$ and let  $\Phi\subset V$ be an irreducible crystallographic root system. Also assume that $V=\mathbb{R}\Phi$. Let $W$ be the associated finite Weyl group. Choose and fix a positive system $\Phi^+$ of $\Phi$ and let $\Delta$ be the simple system of $\Phi^+$. Let $\delta$ be an indeterminate. Define a $\mathbb{R}-$vector space $V'=V\oplus\mathbb{R}\delta$ and extend
the inner product on $V$ to $V'$ by requiring $(\delta,V')=0.$ For
$\alpha\in \Phi^+$, define
$\widehat{\alpha}=\{\alpha+n\delta|n\in \mathbb{Z}_{\geq 0}\}\subset
V'$. For $\alpha\in \Phi^-$, define
$\widehat{\alpha}=\{\alpha+(n+1)\delta|n\in \mathbb{Z}_{\geq
0}\}\subset V'$. For a set $\Gamma\subset \Phi$, define
$\widehat{\Gamma}=\bigcup_{\alpha\in\Gamma}\widehat{\alpha}\subset
V'$. Then the set of roots $\widetilde{\Phi}$ is $\widehat{\Phi}\uplus-\widehat{\Phi}$.
The set of positive roots is $\widetilde{\Phi}^+=\widehat{\Phi}$ and the set of negative roots is $\widetilde{\Phi}^-=-\widehat{\Phi}.$ Let $\alpha\in \widetilde{\Phi}$. Define
$$s_{\alpha}(v)=v-2\frac{(v,\alpha)}{(\alpha,\alpha)}\alpha.$$
Then we have $\widetilde{W}\simeq
\langle s_{\alpha},\alpha\in \widetilde{\Phi}\rangle$ and the set of the simple reflections is $\{s_{\alpha}|\alpha\in \Delta\}\cup \{s_{\delta-\gamma}\}$ where $\gamma$ is the highest root in $\Phi^+$.  For $v\in V$,  define the $\mathbb{R}-$linear map $t_v$ which acts on $V'$   by $t_v(u)=u+(u,v)\delta.$ For $\alpha\in \Phi,$ define $\alpha^{\vee}=2\alpha/(\alpha,\alpha)$, which is called the dual root of $\alpha$ or a coroot. Define $T$ to be the free Abelian group generated by $\{t_{\gamma^{\vee}}|\gamma\in \Delta\}$. Then it can be shown that $\widetilde{W}=W\ltimes T.$ For this construction, see \cite{Kac} or
\cite{GusDyer}.

 Any 2 closure biclosed set in $\Phi$ is of the form $\Psi^+_{\Delta_1,\Delta_2}:=(\Psi^{+}\backslash \mathbb{R}_{\geq 0}\Delta_1)\cup (\mathbb{R}\Delta_2\cap \Phi)$ where $\Psi^+$ is a positive system of $\Phi$ and $\Delta_1,\Delta_2$ are two orthogonal subsets (i.e. $(\alpha,\beta)=0$ for any $\alpha\in \Delta_1,\beta\in \Delta_2$) of the simple system of $\Psi^+.$ For a proof see \cite{biclosedphi}.
In \cite{DyerInitAffine}, Dyer classified all 2 closure biclosed sets in $\widetilde{\Phi}^+$. He shows that
any biclosed set in $\widetilde{\Phi}^+$ differs by only finite many roots from a 2 closure biclosed set of the form $\widehat{\Psi^+_{\Delta_1,\Delta_2}}$. Indeed it is  $w\cdot \widehat{\Psi^+_{\Delta_1,\Delta_2}}$ for some  $w\in W'<\widetilde{W}$ where $W'$ is the reflection subgroup generated by $\{s_{\alpha}|\alpha\in \widehat{\Delta_1\cup \Delta_2}\}$. (An essential step of the proof is to observe that for a biclosed set $B$ in $\widehat{\Phi}$, the set $I_{B}=\{\alpha\in \Phi|\widehat{\alpha}\cap B$ is infinite$\}$ is in fact a biclosed set in $\Phi$.)

\section{Various Definitions And Their Equivalence}

In this and next section unless specifically stated, all closures under consideration are assumed to be 2 closure and thus we omit the word 2 closure when discussing biclosed sets and hemispaces for convenience.

\begin{definition}
Let $(W,S)$ be a Coxeter group and let $B$ be a biclosed set in $\Phi^+$.
The (right) $B$-twisted length function $l_B: W\rightarrow \mathbb{Z}$ is defined by $l_B(w)=l(w)-2|\Phi_w\cap B|$ where $w\in W$ and $l$ is the usual length function.
Define a partial order $\preceq_B'$ on $W$ such that
$x\preceq_B'y$ if and only if $y=xt_1t_2\cdots t_l$ and $l_B(xt_1t_2\cdots t_{i-1})<l_B(xt_1t_2\cdots t_{i})$ for all $1\leq i\leq l$ where $t_i\in T.$ Such an order is called a $B-$twisted Bruhat order.
\end{definition}

See \cite{quotient}, \cite{DyHS2} for basic combinatorics of the twisted Bruhat order.
Replacing $T$ in this definition with $S$, we can define the $B-$twisted weak right order. Thus we have the following

\begin{definition}
Let $(W,S)$ be a Coxeter system. $B$ is a biclosed set
in $\Phi^+$.  We define for $x,y\in W$, $x\lhd y$ if and only if $y=xs$ for some $s\in S$ and $l_B(y)>l_B(x).$

Now define $u\leq_B v$ if one can find $u_1,u_2,\cdots,u_t$ such
that
$$u=u_1\lhd u_2\lhd u_3\lhd \cdots \lhd u_t=v.$$
Then $\leq_B$ is called the $B-$twisted weak (right) order.
\end{definition}

One notes that the covering relation in this definition can be restated using the ordinary length function.
For $s\in S$, let $\alpha_s$ be the corresponding simple root. For $x,y\in W$, $x\lhd y$ if and only if one of the following two
holds:

(1) $y=xs, l(y)=l(x)+1, s\in S$ and $x(\alpha_s)\not\in B$

(2) $x=ys, l(x)=l(y)+1, s\in S$ and $y(\alpha_s)\in B$

One routinely verifies that this is a preorder. One can see it is a well-defined partial order from its relationship with the corresponding $B-$twisted Bruhat order since $x\leq_B y$ implies $x\preceq_B' y$.
Note that $\emptyset-$twisted weak right order is the ordinary weak right order.
Our next goal is to give an alternative characterization of the twisted weak order, which is easier to work with. And by such  identification one can also see that this preorder is indeed a partial order.

\begin{definition}
Let $(W,S)$ be a Coxeter system. $B$ is a biclosed set
in $\Phi^+$.  Define the partial order $\leq_B'$ on $W$ as follow
$$x\leq_B' y \:\:\:\text{if\,and\,only\,if\,}\: \Phi_x\ominus B\subset \Phi_y\ominus B$$ where $\ominus$ denotes the symmetric difference of sets.
\end{definition}

One readily verifies $\leq_B'$ is a partial order using the fact that $\Phi_x=\Phi_y$ implies $x=y$ for $x,y\in W$.
Note that $\leq_{\Phi^+\ominus B}'$ is the reverse order of $\leq_B'$.

\begin{lemma}\label{lem:symmdiff}
$x\leq_B' y$ if and only if $\Phi_{x}\backslash \Phi_y\subset B \:\text{and}\: \Phi_y\backslash \Phi_x\subset \Phi^+\backslash B.$
\end{lemma}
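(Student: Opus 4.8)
The plan is to reduce the claimed equivalence to a purely set-theoretic identity about symmetric differences, carried out inside the ambient set $\Phi^+$. Since $\Phi_x$, $\Phi_y$ and $B$ are all subsets of $\Phi^+$, we may work in the universe $\Phi^+$, so that $\Phi^+\backslash B$ is simply the complement of $B$ and no roots outside $\Phi^+$ ever enter the picture. The statement $\Phi_x\ominus B\subset \Phi_y\ominus B$ is then to be unwound element by element.

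Concretely, I would argue by contraposition: $\Phi_x\ominus B\not\subset\Phi_y\ominus B$ exactly when there is some $\alpha\in\Phi^+$ with $\alpha\in\Phi_x\ominus B$ but $\alpha\notin\Phi_y\ominus B$. Splitting on whether $\alpha\in B$, one checks that for $\alpha\in B$ the condition ``$\alpha\in\Phi_x\ominus B$ and $\alpha\notin\Phi_y\ominus B$'' is equivalent to ``$\alpha\notin\Phi_x$ and $\alpha\in\Phi_y$'', i.e.\ $\alpha\in(\Phi_y\backslash\Phi_x)\cap B$; and for $\alpha\notin B$ it is equivalent to ``$\alpha\in\Phi_x$ and $\alpha\notin\Phi_y$'', i.e.\ $\alpha\in(\Phi_x\backslash\Phi_y)\cap(\Phi^+\backslash B)$. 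Hence $\Phi_x\ominus B\subset\Phi_y\ominus B$ fails precisely when $(\Phi_y\backslash\Phi_x)\cap B\neq\emptyset$ or $(\Phi_x\backslash\Phi_y)\cap(\Phi^+\backslash B)\neq\emptyset$. Negating, $x\leq_B' y$ holds if and only if $\Phi_x\backslash\Phi_y\subset B$ and $\Phi_y\backslash\Phi_x\subset\Phi^+\backslash B$, which is the asserted pair of inclusions.

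Alternatively, and perhaps more transparently, one can partition $\Phi^+$ into the four blocks $\Phi_x\cap\Phi_y$, $\Phi_x\backslash\Phi_y$, $\Phi_y\backslash\Phi_x$ and $\Phi^+\backslash(\Phi_x\cup\Phi_y)$, intersect each with $B$ and with $\Phi^+\backslash B$, and read off from the resulting table of eight cells which cells contribute to $\Phi_x\ominus B$ and which to $\Phi_y\ominus B$; the inclusion then visibly amounts to exactly two of these cells being empty, namely $(\Phi_x\backslash\Phi_y)\cap(\Phi^+\backslash B)$ and $(\Phi_y\backslash\Phi_x)\cap B$. Either route is elementary; the only point requiring care is bookkeeping the case split on membership in $B$ correctly, so I do not anticipate a genuine obstacle here.
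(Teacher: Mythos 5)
Your proof is correct and takes essentially the same route as the paper, which simply asserts that the equivalence "can be easily verified by inspection of a suitable Venn diagram"; your case split on membership in $B$ (and your alternative eight-cell partition) is exactly that verification carried out explicitly. No gap here.
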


\begin{proof}
The equivalence can be easily verified by inspection of a suitable Venn diagram.
\end{proof}

\begin{definition}
Let $B$ be a biclosed set in $\Phi^+.$
Define $\Sigma_B=-B\uplus (\Phi^+\backslash B)$
which is a hemispace. For $x\in W$, also define the
hemispace $\Sigma_x=\Phi_x\uplus -(\Phi^+\backslash
\Phi_x)$. Then we obtain a biclosed set $\Sigma_{B,x}=\Sigma_x\cap
\Sigma_B$ in $\Sigma_B$.
\end{definition}

\begin{lemma}
Under the map $x\mapsto \Sigma_{B,x}$, the
order $(W,\leq_B')$ embeds as a sub-poset of
$\mathscr{B}(\Sigma_{B})$, the poset of biclosed sets in $\Sigma_B$ under inclusion. If $B$ is finite, then the image of this map is precisely the set of all finite biclosed sets in $\Sigma_B.$
\end{lemma}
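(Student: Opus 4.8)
The plan is to give explicit set-theoretic formulas for $\Sigma_{B,x}$ and for the composite of the two hemispace bijections, and then read off both assertions.

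First I would record, straight from the definitions, that
$$\Sigma_{B,x}=\Sigma_x\cap\Sigma_B=(\Phi_x\setminus B)\uplus -(B\setminus\Phi_x),$$
so the positive part of $\Sigma_{B,x}$ carries $\Phi_x\setminus B$ and its negative part carries $B\setminus\Phi_x$; together these are a "signed" version of the symmetric difference $\Phi_x\ominus B$. For the embedding statement, the inclusion $\Sigma_{B,x}\subseteq\Sigma_{B,y}$ then splits into a positive-part condition $\Phi_x\setminus B\subseteq\Phi_y\setminus B$ and a negative-part condition $B\setminus\Phi_x\subseteq B\setminus\Phi_y$, i.e.\ $B\cap\Phi_y\subseteq B\cap\Phi_x$. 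Passing to contrapositives, these two inclusions say exactly that $\Phi_x\setminus\Phi_y\subseteq B$ and $\Phi_y\setminus\Phi_x\subseteq\Phi^+\setminus B$, which by Lemma~\ref{lem:symmdiff} is precisely the defining condition of $x\leq_B'y$. Hence $x\mapsto\Sigma_{B,x}$ is order preserving \emph{and} order reflecting; injectivity is then immediate, since $\Sigma_{B,x}=\Sigma_{B,y}$ forces $\Phi_x\ominus B=\Phi_y\ominus B$, hence $\Phi_x=\Phi_y$, hence $x=y$. This half is a routine Venn-diagram verification, in the spirit of Lemma~\ref{lem:symmdiff}.

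For the image when $B$ is finite: since $\Phi_x$ is finite for every $x\in W$, the displayed formula shows each $\Sigma_{B,x}$ is a finite biclosed set in $\Sigma_B$, so the image is contained in the set of finite biclosed sets. For the reverse inclusion, let $C$ be a finite biclosed set in $\Sigma_B$, with positive part $C^+=C\cap\Phi^+\subseteq\Phi^+\setminus B$ and negative part $C^-=C\cap\Phi^-\subseteq -B$; write $-C^-=\{-\gamma:\gamma\in C^-\}\subseteq B$. I would pass through the two mutually inverse hemispace bijections: first to the hemispace $H_C=C\uplus -(\Sigma_B\setminus C)$ attached to $C$ (using that the bijection between biclosed sets and hemispaces, noted in the preliminaries, holds with $\Phi^+$ replaced by the hemispace $\Sigma_B$), and then to the biclosed set $\Gamma=H_C\cap\Phi^+$ in $\Phi^+$. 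A short computation gives $\Gamma=C^+\uplus(B\setminus(-C^-))$, which is finite because $C^+$ and $B$ are. By the fact recalled in Section~2 (every finite biclosed set in $\Phi^+$ is an inversion set; see \cite{DyerWeakOrder}), $\Gamma=\Phi_x$ for some $x\in W$. Finally, the hemispace attached to a biclosed set of $\Phi^+$ is unique, and both $\Sigma_x$ and $H_C$ are hemispaces meeting $\Phi^+$ in $\Gamma$, so $\Sigma_x=H_C$; therefore $\Sigma_{B,x}=\Sigma_x\cap\Sigma_B=H_C\cap\Sigma_B=C$, the last equality because $-(\Sigma_B\setminus C)\subseteq-\Sigma_B$ is disjoint from $\Sigma_B$. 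Thus $C$ lies in the image.

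The main obstacle is this converse: given a prescribed finite biclosed set $C$ in $\Sigma_B$, one must manufacture a group element realizing it. The point that makes it work without extra effort is that chasing $C$ through the two inverse bijections $\mathscr{B}(\Sigma_B)\leftrightarrow\{\text{hemispaces}\}\leftrightarrow\mathscr{B}(\Phi^+)$ automatically delivers a \emph{biclosed} set $\Gamma\subseteq\Phi^+$, which is finite and hence an inversion set $\Phi_x$ — no separate closedness argument is needed. What does require a little care is keeping the sign bookkeeping (positive versus negative parts) straight, and using that $\Sigma_B$ is a genuine hemispace, so that $\Sigma_B\cap(-\Sigma_B)=\emptyset$, to conclude that the two ways of building the hemispace from $\Gamma$ agree and that $\Sigma_{B,x}$ recovers $C$ exactly.
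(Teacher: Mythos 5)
Your proof is correct and follows essentially the same route as the paper's: a direct set-theoretic verification (via Lemma~\ref{lem:symmdiff}) for the order embedding, and, for the image when $B$ is finite, a chase through the two hemispace bijections $\mathscr{B}(\Sigma_B)\leftrightarrow\{\text{hemispaces}\}\leftrightarrow\mathscr{B}(\Phi^+)$ combined with the fact that finite biclosed sets in $\Phi^+$ are inversion sets. You merely spell out more explicitly (via the formula $\Gamma=C^+\uplus(B\setminus(-C^-))$) the finiteness transfer that the paper dispatches in one line.
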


\begin{proof}
We first show that such a map is injective. Suppose
$\Sigma_{B,x}=\Sigma_{B,y}$. Then we have $\Phi_x\cap
(\Phi^+\backslash B)=\Phi_y\cap (\Phi^+\backslash B),
(\Phi^+\backslash \Phi_x)\cap B=(\Phi^+\backslash \Phi_y)\cap
B$. This is equivalent to $\Phi_x\backslash B=\Phi_y\backslash B$ and $B\backslash
\Phi_x=B\backslash \Phi_y$. So $B\ominus\Phi_x=B\ominus\Phi_y$. By Lemma \ref{lem:symmdiff}. this is
equivalent to $x\leq_B' y$ and $y\leq_B' x$ and thus $x=y.$ It is similar to see that
this is an order-preserving map if we note that
$$\Phi_y\cap (\Phi^+\backslash B)\subset \Phi_x\cap
(\Phi^+\backslash B)$$ is equivalent to
$$\Phi_y\backslash \Phi_x\subset B$$ and that $$(\Phi^+\backslash \Phi_y)\cap
B\subset (\Phi^+\backslash \Phi_x)\cap B$$ is equivalent to
$$\Phi_x\backslash \Phi_y\subset \Phi^+\backslash
B.$$
Now suppose $B$ is finite. Clearly the image of the map consists of the finite biclosed sets in $\Sigma_B.$
Otherwise a finite biclosed set of $\Sigma_B$ must be of the form $\Sigma_B\cap (C\uplus -(\Phi^+\backslash C))$ where $C$ is a biclosed set in $\Phi^+.$ If $C$ is infinite, then $C\cap (\Phi^+\backslash B)$ must be infinite, which is a contradiction.
\end{proof}

\begin{lemma}\label{lem:conjfin}
Let $u\in W$. Then $\mathscr{B}(u\Phi^+)$ is isomorphic to
$\mathscr{B}(\Phi^+)$ as poset.
\end{lemma}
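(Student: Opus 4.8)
The plan is to transport biclosed sets through the linear action of $u^{-1}$ on the ambient space $V=\mathbb{R}\Phi$. The key point to establish first is that this action is compatible with the $2$-closure operator. Since $u^{-1}$ is a linear automorphism of $V$ that permutes $\Phi$, for any $\alpha,\beta\in\Phi$ one has
$$u^{-1}\bigl(\{k_1\alpha+k_2\beta \mid k_1,k_2\in\mathbb{R}_{\geq 0}\}\cap\Phi\bigr)=\{k_1 u^{-1}(\alpha)+k_2 u^{-1}(\beta)\mid k_1,k_2\in\mathbb{R}_{\geq 0}\}\cap\Phi,$$
and consequently a subset $\Gamma\subseteq\Phi$ is $2$-closure closed if and only if $u^{-1}\Gamma:=\{u^{-1}(\gamma)\mid\gamma\in\Gamma\}$ is $2$-closure closed (the reverse implication being the same statement applied to $u$).

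With this in hand I would consider the map $\Psi\colon\Lambda\mapsto u^{-1}\Lambda$. Because $u^{-1}(u\Phi^+)=\Phi^+$, it sends subsets of $u\Phi^+$ to subsets of $\Phi^+$; it is a bijection with inverse $\Lambda\mapsto u\Lambda$; and it plainly preserves and reflects inclusions. If $\Lambda$ is biclosed in $u\Phi^+$, then both $\Lambda$ and $(u\Phi^+)\setminus\Lambda$ are $2$-closure closed, so by the compatibility above both $u^{-1}\Lambda$ and $u^{-1}\bigl((u\Phi^+)\setminus\Lambda\bigr)=\Phi^+\setminus u^{-1}\Lambda$ are $2$-closure closed, i.e.\ $u^{-1}\Lambda$ is biclosed in $\Phi^+$. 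Running the same argument with $u$ in place of $u^{-1}$ shows $\Psi^{-1}$ carries biclosed sets in $\Phi^+$ to biclosed sets in $u\Phi^+$. Hence $\Psi$ restricts to an inclusion-preserving bijection $\mathscr{B}(u\Phi^+)\to\mathscr{B}(\Phi^+)$ whose inverse is also inclusion-preserving, that is, a poset isomorphism.

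I do not expect any genuine obstacle here: the entire content sits in the one-line compatibility of the $2$-closure operation with the linear $W$-action, which is immediate from linearity together with the fact that $W$ permutes $\Phi$. (Exactly the same reasoning yields an isomorphism between the posets of cone closure biclosed sets as well, since cone closure is likewise defined via finite nonnegative linear combinations of roots.)
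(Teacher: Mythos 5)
Your proposal is correct and is essentially the paper's own argument: the paper's entire proof is ``Map $A\mapsto uA$ and we get the poset isomorphism,'' and you have simply written out the same map (in the direction $\Lambda\mapsto u^{-1}\Lambda$) together with the routine verification that the linear $W$-action commutes with the $2$-closure operator. No difference in substance.
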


\begin{proof}
Map $A\mapsto uA$ and we get the poset isomorphism.
\end{proof}

\begin{remark}
Note that $u\Phi^+=-\Phi_u\uplus
(\Phi^+\backslash\Phi_u)$.
The consequence of the above two lemmas is that $(W,\leq_{\emptyset}')\simeq
(W,\leq_{\Phi_x}')$ for any $x\in W$ as poset.

Therefore if $B$ is finite, $(W,\leq_B')$ is a complete meet semilattice.
If $B$ is co-finite, $(W,\leq_B')$ is a complete join semilattice.
\end{remark}

\begin{definition}
Let $\Psi$ be a hemispace and $G(\Psi)$ be the undirected graph with the vertex set
$\mathscr{B}(\Psi)$ (the set of biclosed sets in $\Psi$) and the edge set $\{(B_1,B_2)|$ if $B_1\subseteq B_2$ and
there is no $B_3\in \mathscr{B}(\Psi)$ such that $B_1\subseteq B_3\subseteq
B_2$ or $B_2\subseteq B_1$ and there is no $B_3\in \mathscr{B}(\Psi)$ such
that $B_2\subseteq B_3\subseteq B_1\}$.
\end{definition}

Such a graph has the similar flavor of the tope graph of an oriented matroid. But one notes that even if one replaces the hemispaces and the biclosed sets in the above definition with the cone closure counterparts it is
in general not the tope graph as there are no bijections between the set of cone closure hemispaces and the cone closure biclosed sets in a particular hemispace. And we also remark that when $\Phi$ is infinite, such a graph is not connected.

\begin{lemma}\label{lengthpath}
Let $\Phi_x$ and $\Phi_y$ be two finite biclosed sets
in $\Phi^+$. Then there is a path of length $|\Phi_x\ominus\Phi_y|$
connecting $\Phi_x$ and $\Phi_y$ in $G(\Phi^+)$ where $\ominus$ denotes the
symmetric difference.
\end{lemma}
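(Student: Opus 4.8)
The plan is to read the desired path off a reduced word of $x^{-1}y$, using that consecutive inversion sets along such a word are edges of $G(\Phi^+)$; in effect the induced subgraph of $G(\Phi^+)$ on the set of inversion sets contains the Cayley graph of $(W,S)$. Since $\Phi_x$ and $\Phi_y$ are finite biclosed sets in $\Phi^+$ they are inversion sets, so I regard $x,y\in W$. The first step is to check that for every $w\in W$ and every $s\in S$ the pair $\{\Phi_w,\Phi_{ws}\}$ is an edge of $G(\Phi^+)$. If $l(ws)>l(w)$ then $\Phi_{ws}=\Phi_w\uplus\{w(\alpha_s)\}$, and if $l(ws)<l(w)$ then $\Phi_w=\Phi_{ws}\uplus\{-w(\alpha_s)\}$ with $-w(\alpha_s)\in\Phi^+$; in either case $\Phi_w$ and $\Phi_{ws}$ are comparable and their symmetric difference is a single root, so for purely set-theoretic reasons no biclosed set (finite or infinite) can lie strictly between them, and hence they are joined by an edge.

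Next I would build the path. Fix a reduced expression $x^{-1}y=s_1s_2\cdots s_n$ with $n=l(x^{-1}y)$ and set $x_i=xs_1\cdots s_i$, so that $x_0=x$ and $x_n=y$. By the previous step each unordered pair $\{\Phi_{x_{i-1}},\Phi_{x_i}\}$ is an edge of $G(\Phi^+)$, and the $x_i$ are pairwise distinct because every contiguous subword of a reduced word is again reduced. Therefore $\Phi_x=\Phi_{x_0},\Phi_{x_1},\dots,\Phi_{x_n}=\Phi_y$ is a path of length $n$ in $G(\Phi^+)$.

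It remains to identify $n$ with $|\Phi_x\ominus\Phi_y|$. Here I would exhibit an explicit bijection $\Phi_{x^{-1}y}\to\Phi_x\ominus\Phi_y$ sending $\gamma$ to $x(\gamma)$ when $x(\gamma)\in\Phi^+$ and to $-x(\gamma)$ when $x(\gamma)\in\Phi^-$; unwinding the definition $\gamma\in\Phi_{x^{-1}y}\iff\bigl(\gamma\in\Phi^+\ \text{and}\ y^{-1}x(\gamma)\in\Phi^-\bigr)$ shows that roots of the first kind land in $\Phi_y\setminus\Phi_x$, roots of the second kind land in $\Phi_x\setminus\Phi_y$, and the assignment is invertible. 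Then $n=l(x^{-1}y)=|\Phi_{x^{-1}y}|=|\Phi_x\ominus\Phi_y|$, as wanted. (Alternatively one may simply quote the cocycle identity $l(x^{-1}y)=|\Phi_x\ominus\Phi_y|$, or argue inside the twisted weak order, transporting a maximal chain of the ordinary weak order along the isomorphism $(W,\leq_{\emptyset}')\simeq(W,\leq_{\Phi_x}')$ of the Remark above.) I expect the only genuinely fiddly point to be this last sign bookkeeping; the verification that consecutive inversion sets $\Phi_w,\Phi_{ws}$ give edges, and that a reduced word yields a path, are immediate from the definitions and from the standard facts about reduced words recalled in the Preliminaries.
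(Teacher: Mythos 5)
Your proof is correct, and the path it produces is in fact the very same path the paper constructs, but you reach it by a more elementary route. The paper translates everything into the hemispace $x\Phi^+$, where $\Phi_x$ becomes $\emptyset$ and $\Phi_y$ becomes the biclosed set $C_2=(\Phi_y\setminus\Phi_x)\uplus-(\Phi_x\setminus\Phi_y)$, invokes the poset isomorphism $\mathscr{B}(x\Phi^+)\simeq\mathscr{B}(\Phi^+)$ to extract a maximal chain $\emptyset\subset\{\alpha_1\}\subset\cdots\subset C_2$ of biclosed sets, and then intersects the corresponding hemispaces with $\Phi^+$ to read off the vertices $B_1=\Phi_x,\dots,B_m=\Phi_y$. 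Unwound, that chain is exactly a reduced word for $x^{-1}y$, so your construction via $x_i=xs_1\cdots s_i$ is the same object obtained without the hemispace machinery: you verify directly that $\{\Phi_w,\Phi_{ws}\}$ is always an edge (the one--root symmetric difference makes this purely set-theoretic, which correctly handles possible infinite intermediate biclosed sets), and you supply the count $l(x^{-1}y)=|\Phi_x\ominus\Phi_y|$ via an explicit sign-adjusted bijection $\gamma\mapsto\pm x(\gamma)$, which is the standard cocycle identity and checks out. What the paper's phrasing buys is consistency with the surrounding framework of hemispaces and the posets $\mathscr{B}(\Sigma_B)$, which it reuses elsewhere; what your phrasing buys is brevity and self-containedness, at the cost of having to prove the cocycle identity by hand. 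Both arguments are complete; the only cosmetic caveat is that your appeal to ``the induced subgraph contains the Cayley graph'' is really just the edge verification you already carry out, so nothing is missing.
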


\begin{proof}
Consider the hemispace $-\Phi_x\uplus
(\Phi^+\backslash\Phi_x)=x\Phi^+.$ Now the following $C_1, C_2$ are
two finite biclosed sets in this hemispace.

$$C_1=(-\Phi_x\uplus (\Phi^+\backslash \Phi_x))\cap (\Phi_x\uplus (-(\Phi^+\backslash \Phi_x)))=\emptyset$$
$$C_2=(-\Phi_x\uplus (\Phi^+\backslash \Phi_x))\cap (\Phi_y\uplus (-(\Phi^+\backslash \Phi_y)))$$
$$=(\Phi_y\backslash (\Phi_x\cap \Phi_y))\uplus -(\Phi_x\backslash (\Phi_x\cap \Phi_y))$$

Denote $m=|\Phi_x\ominus\Phi_y|$.

Since $\mathscr{B}(x\Phi^+)$ is isomorphic to $\mathscr{B}(\Phi^+)$ as poset. Then we
can find $\alpha_1, \alpha_2, \cdots, \alpha_m\in C_2$ such that
$$\emptyset=C_1\subset \{\alpha_1\} \subset \{\alpha_1,\alpha_2\}\subset \cdots  \subset \{\alpha_1,\alpha_2,\cdots, \alpha_m\}=C_2$$
with each set in this chain biclosed in the hemispace $x\Phi^+$.

Now consider the following hemispaces:
$$\Phi_x\uplus -(\Phi^+\backslash \Phi_x)$$
$$(\Phi_x\uplus -(\Phi^+\backslash \Phi_x)\backslash \{-\alpha_1\})\cup \{\alpha_1\}$$
$$(\Phi_x\uplus -(\Phi^+\backslash \Phi_x)\backslash \{-\alpha_1,-\alpha_2\})\cup \{\alpha_1,\alpha_2\}$$
$$\cdots$$
$$(\Phi_x\uplus -(\Phi^+\backslash \Phi_x)\backslash \{-\alpha_1,-\alpha_2,\cdots,-\alpha_m\})\cup \{\alpha_1,\alpha_2,\cdots,\alpha_m\}$$
(the last one is $\Phi_y\uplus -(\Phi^+\backslash \Phi_y)$.)

Intersecting these hemispaces with $\Phi^+$ one gets
$$B_1=\Phi_x$$
$$B_2=B_1\uplus \{\alpha_1\} \:\text{or}\: B_1\backslash \{-\alpha_1\}$$
$$B_3=B_2\uplus \{\alpha_2\} \:\text{or}\: B_2\backslash \{-\alpha_2\}$$
$$\cdots$$
$$B_m=B_{m-1}\uplus \{\alpha_{m-1}\} \:\text{or}\: B_{m-1}\backslash \{-\alpha_{m-1}\}=\Phi_y$$

proving the lemma.
\end{proof}

\begin{theorem}
$(W,\leq_B')$ and $(W,\leq_B)$ define the same order.
\end{theorem}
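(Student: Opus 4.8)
The plan is to show that the two relations coincide by proving each contains the other. Since $\leq_B$ is, by construction, the transitive closure of the covering relation $\lhd$, while $\leq_B'$ is already a partial order, for the inclusion ``$x\leq_B y\Rightarrow x\leq_B'y$'' it is enough to check that $x\lhd y$ implies $x\leq_B'y$. Using the reformulation of $\lhd$ through the ordinary length function: in case $(1)$ one has $\Phi_y=\Phi_x\uplus\{x(\alpha_s)\}$ with $x(\alpha_s)\in\Phi^+\setminus B$, and in case $(2)$ one has $\Phi_x=\Phi_y\uplus\{y(\alpha_s)\}$ with $y(\alpha_s)\in B$. In either case $\Phi_x\setminus\Phi_y\subseteq B$ and $\Phi_y\setminus\Phi_x\subseteq\Phi^+\setminus B$, so $x\leq_B'y$ by Lemma~\ref{lem:symmdiff}.

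For the reverse inclusion, suppose $x\leq_B'y$; by Lemma~\ref{lem:symmdiff} this says $\Phi_x\setminus\Phi_y\subseteq B$ and $\Phi_y\setminus\Phi_x\subseteq\Phi^+\setminus B$, and $\Phi_x\ominus\Phi_y$ is finite. I would run $x$ and $y$ through the construction in the proof of Lemma~\ref{lengthpath}: it produces finite biclosed sets $\Phi_x=B_1,\dots,B_m=\Phi_y$ in $\Phi^+$ (with $m=|\Phi_x\ominus\Phi_y|$) for which, at each step, either $B_{i+1}=B_i\uplus\{\gamma_i\}$ with $\gamma_i\in\Phi_y\setminus\Phi_x$, or $B_i=B_{i+1}\uplus\{\gamma_i\}$ with $\gamma_i\in\Phi_x\setminus\Phi_y$. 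Each finite biclosed set in $\Phi^+$ is an inversion set, so write $B_i=\Phi_{w_i}$ with $w_1=x$ and $w_m=y$. Two inversion sets that are nested with index one come from an element and that element times a single simple reflection, the length increasing by one from the shorter to the longer, the single root of the difference being the image of the corresponding simple root. Thus if $B_{i+1}=B_i\uplus\{\gamma_i\}$ then $w_{i+1}=w_is_i$, $l(w_{i+1})=l(w_i)+1$, and $\gamma_i=w_i(\alpha_{s_i})\in\Phi_y\setminus\Phi_x\subseteq\Phi^+\setminus B$, which is precisely clause $(1)$, so $w_i\lhd w_{i+1}$; and if $B_i=B_{i+1}\uplus\{\gamma_i\}$ then $w_i=w_{i+1}s_i$, $l(w_i)=l(w_{i+1})+1$, and $\gamma_i=w_{i+1}(\alpha_{s_i})\in\Phi_x\setminus\Phi_y\subseteq B$, which is precisely clause $(2)$, so again $w_i\lhd w_{i+1}$. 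Hence $x=w_1\lhd w_2\lhd\cdots\lhd w_m=y$, i.e.\ $x\leq_B y$ (the case $m=0$ being $x=y$).

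The routine ingredients along the way are that $H\cap\Phi^+$ is biclosed in $\Phi^+$ for any hemispace $H$, that an index-one nesting of inversion sets is realized by right multiplication by a simple reflection together with the identification of the differing root, and that the path of Lemma~\ref{lengthpath} may simply be reread rather than reconstructed. I expect the only point requiring care to be the bookkeeping in the middle paragraph: one must extract from the proof of Lemma~\ref{lengthpath} that consecutive biclosed sets on the path differ by exactly one positive root, keep track of whether that root lies in $\Phi_y\setminus\Phi_x$ or in $\Phi_x\setminus\Phi_y$, and observe that the membership constraints on $\gamma_i$ forced by $x\leq_B'y$ are exactly the constraints ``$x(\alpha_s)\notin B$'' resp.\ ``$y(\alpha_s)\in B$'' appearing in the two clauses defining the covering relation.
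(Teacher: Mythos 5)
Your proposal is correct and follows essentially the same route as the paper: both directions are handled exactly as in the paper's proof, with the forward implication checked on covering relations and the converse obtained by rereading the path of Lemma~\ref{lengthpath}, identifying each intermediate finite biclosed set with an inversion set, and observing that each single-root step realizes one of the two clauses of the covering relation. Your write-up merely spells out the bookkeeping that the paper leaves implicit.
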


\begin{proof}
It is immediate that $x\leq_B y$ implies $x\leq_B' y.$ Now assume $x\leq_B' y$.
By Lemma \ref{lengthpath} one can find a path of minimal
possible length $|\Phi_x\ominus\Phi_y|$ in $G(\Phi^+)$ connecting
$\Phi_x$ and $\Phi_y$. Hence along this path, one either removes a
root in $\Phi_x$ but not in $\Phi_y$ or add a root in $\Phi_y$ but
not in $\Phi_x$, the former is in $B$ and the latter is not in $B$.
Suppose that the path goes through $\Phi_{x}, \Phi_{u_1}, \Phi_{u_2}, \cdots, \Phi_{u_t}, \Phi_y$.
We have $x\lhd u_1\lhd u_2 \lhd \cdots \lhd u_t \lhd y.$
\end{proof}

Therefore in what follows we shall freely use the identification of these two descriptions of the twisted weak order.
In the remaining part of this section we give some examples of the twisted weak orders and also indicate some situations in which twisted weak orders show up.

\begin{example}
(\emph{Limit Weak Order}) Denote by $W_l$ the set of all infinite reduced words.
It is natural to extend the weak order to the set  $W\cup W_l$, i.e. two words $x\leq y$ if and only if $\Phi_x\subset \Phi_y.$
When restricted to $W_l$ such an order is called the limit weak order in \cite{titsboundary}. Two infinite reduced words are said to be in the same block if their inversion sets differ only by finite many roots. In \cite{titsboundary}, the authors showed that when $W$ is finite rank and  word hyperbolic, each block of the limit
weak order is isomorphic to the weak order of some finite Coxeter group. If $W$ is an affine Weyl group, each block of  the limit weak order is isomorphic to the weak order of some affine Coxeter group (or consists of just one element). But if infinite rank Coxeter groups are considered, a block of limit weak order can be isomorphic to the nontrivial twisted weak order on some Coxeter group as demonstrated by the following easy example. Let $(W_i,S_i), i=1,2,\cdots$ be the Coxeter systems of type $A_1$ (i.e. isomorphic to $\mathbb{Z}_2$) and $S_i=\{s_i\}$. Consider the weak direct product $W:={}_w\Pi_{i=1}^{\infty}W_i$ (the subgroup of the direct product $\Pi_{i=1}^{\infty}W_i$ generated by the (isomorphic) subgroups $\{W_i\}_{i=1}^{\infty}$). Then $(W,S)$ is a Coxeter system. Any infinite subset of its positive system $\Phi^+=\{\alpha_{s_i}|i=1,2,\cdots\}$ is biclosed and in fact the inversion set of an infinite reduced word. We shall show that for any infinite biclosed set $B\subset \Phi^+$, there exists a block in the limit weak order isomorphic to $(W,\leq_B)$.  Such a block is the one containing the infinite reduced word $\Pi_{\alpha_{s_i}\in B}s_i.$ The isomorphism is given by the map: $w\mapsto w\Pi_{\alpha_{s_i}\in B}s_i$ and one checks readily that $w_1\leq_B w_2\Leftrightarrow \Phi_{w_1\Pi_{\alpha_{s_i}\in B}s_i}\subset \Phi_{w_2\Pi_{\alpha_{s_i}\in B}s_i}$.
\end{example}

\begin{example}
(\emph{1-skeleton of The Tessellation by Permutahedra})
In this example, one can see that the twisted weak order may have a very different global structure from the ordinary weak order.
For an (irreducible) affine Weyl group $\widetilde{W}$, the Hasse diagram of certain twisted weak order gives the ``1 skeleton" of the (cell complex of the) tessellation of the Euclidean space by  permutahedra of the finite parabolic subgroups of $\widetilde{W}.$ Such orders are defined by biclosed sets of the form $\widehat{\Phi^+}$ for some positive system $\Phi^+$ of the corresponding finite Weyl group $W$ (See Corollary \ref{lattice} below) and is in fact a (non-complete) lattice. (Part of) the Hasse diagrams of the twisted weak orders $\leq_{\widehat{\Phi^-}}$ on groups of type $\widetilde{A}_1,\widetilde{A}_2$ are depicted below.

\begin{tikzpicture}[scale=.4]
\footnotesize
\node (o) at (6,2)   {$s_{\alpha}s_{\delta-\alpha}s_{\alpha}$};
  \node (k)  at (6,0)  {$s_{\alpha}s_{\delta-\alpha}$};
  \node (h) at (6,-2)  {$s_{\alpha}$};
  \node (g) at (6,-4) {$e$};
  \node (l) at (6,-6)  {$s_{\delta-\alpha}$};
  \node (m)  at (6,-8)  {$s_{\delta-\alpha}s_{\alpha}$};
  \draw  (h)--(g);
  \draw  (h)--(k);
  \draw  (l)--(g);
  \draw  (m)--(l);
  \draw  (k)--(o);
  \end{tikzpicture}
\begin{tikzpicture}[scale=.45]
\scriptsize
  \node (a) at (0,2) {$s_{\alpha}s_{\beta}s_{\alpha}$};
   \node (b) at (4,2) {$s_{\beta}s_{\alpha}s_{\delta-\alpha-\beta}$};
  \node (c) at (-2,0) {$s_{\alpha}s_{\beta}$};
  \node (d) at (2,0) {$s_{\beta}s_{\alpha}$};
  \node (e) at (-2,-2) {$s_{\alpha}$};
  \node (f) at (2,-2) {$s_{\beta}$};
  \node (g) at (0,-4) {$e$};
  \node (h)  at (4,-4) {$s_{\beta}s_{\delta-\alpha-\beta}$};
  \node (i)   at (6,-2)  {$s_{\beta}s_{\delta-\alpha-\beta}s_{\alpha}$};
  \node (j)   at (6,0)  {$s_{\beta}s_{\alpha}s_{\delta-\alpha-\beta}s_{\alpha}$};
  \node (k)  at (-4,-4) {$s_{\alpha}s_{\delta-\alpha-\beta}$};
  \node (l)  at (-6,-2) {$s_{\alpha}s_{\delta-\alpha-\beta}s_{\beta}$};
  \node (m)  at (-6,0) {$s_{\alpha}s_{\beta}s_{\delta-\alpha-\beta}s_{\beta}$};
  \node  (n)  at  (-4,2) {$s_{\alpha}s_{\beta}s_{\delta-\alpha-\beta}$};
  \node   (o)  at  (0,-6) {$s_{\delta-\alpha-\beta}$};
  \node  (p)  at  (-2,-8)  {$s_{\delta-\alpha-\beta}s_{\alpha}$};
  \node  (q)  at  (-4,-6)  {$s_{\delta-\alpha-\beta}s_{\alpha}s_{\delta-\alpha-\beta}$};
  \node  (r)  at  (2,-8)  {$s_{\delta-\alpha-\beta}s_{\beta}$};
  \node  (t)  at  (4,-6)  {$s_{\delta-\alpha-\beta}s_{\beta}s_{\delta-\alpha-\beta}$};
  \node  (u)  at  (-6,-8) {$s_{\alpha}s_{\delta-\alpha-\beta}s_{\alpha}s_{\beta}$};
  \node  (v)  at  (6,-8) {$s_{\beta}s_{\delta-\alpha-\beta}s_{\beta}s_{\alpha}$};
  \draw (c) -- (a);
  \draw (d) -- (a);
  \draw (d) -- (b);
  \draw (e)--(c);
  \draw  (f)--(d);
  \draw  (g)--(e);
  \draw  (g)--(f);
  \draw  (h)--(f);
  \draw   (h)--(i);
  \draw   (i)--(j);
  \draw   (j)--(b);
  \draw   (k)--(e);
  \draw   (k)--(l);
  \draw   (l)--(m);
  \draw   (m)--(n);
  \draw  (c)--(n);
  \draw  (o)--(g);
  \draw   (p)--(o);
  \draw   (p)--(q);
  \draw   (q)--(k);
  \draw  (r)--(o);
  \draw  (r)--(t);
  \draw  (t)--(h);
  \draw  (u)--(q);
  \draw  (v)--(t);
\end{tikzpicture}
\end{example}

\begin{example}
(\emph{The Poset of 2 closure Biclosed Sets in $\Phi^+$})  In \cite{DyerInitAffine}, Dyer classified all 2 closure biclosed sets in $\widetilde{\Phi}^+$ for affine Weyl groups and described the partial order of them under inclusion. Similarly one can consider the blocks in such orders. Within one block two biclosed sets differ by only finite many roots.  It follows from Dyer's results that every block is isomorphic to the twisted weak of some Coxeter groups. To be precise, the block containing the biclosed sets $\widehat{\Psi_{\Delta_1,\Delta_2}^+}$ is isomorphic to the twisted weak order $<_{\Phi^+_{\widetilde{W_2}}}$ on $\widetilde{W_1}\times \widetilde{W_2}$ where $\widetilde{W_i}, i=1,2$ is the reflection subgroup of $\widetilde{W}$ generated by $s_{\alpha},\alpha\in\widehat{\Delta_i}, i=1,2$.
\end{example}

\section{Characterization of The Inversion Sets of Infinite Reduced Words}

In this section we will prove the main result of this paper regarding the geometry of the root system and the combinatorics of the twisted weak orders. In what follows $(W,S)$ is a Coxeter system. $\Phi$ is its root system and $\Phi^+$ is the set of positive roots. $B$ is a biclosed set in $\Phi^+.$

\begin{lemma}\label{intervallemma}
Suppose $x\leq_B y\leq_B z$.
Then $\Phi_x\backslash \Phi_y\subset \Phi_x\backslash \Phi_z$,
$\Phi_y\backslash \Phi_x\subset \Phi_z\backslash \Phi_x,$
$\Phi_y\backslash \Phi_z\subset \Phi_x\backslash \Phi_z$ and
$\Phi_z\backslash \Phi_y\subset \Phi_z\backslash \Phi_x.$

Consequently $\Phi_x\backslash \Phi_z=(\Phi_x\backslash
\Phi_y)\uplus (\Phi_y\backslash \Phi_z)$ and $\Phi_z\backslash
\Phi_x=(\Phi_z\backslash \Phi_y)\uplus (\Phi_y\backslash
\Phi_x)$.
\end{lemma}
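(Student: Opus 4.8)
The plan is to exploit the characterization of $\leq_B$ given by Lemma~\ref{lem:symmdiff}: each inequality $x \leq_B y$ means precisely that $\Phi_x \setminus \Phi_y \subset B$ and $\Phi_y \setminus \Phi_x \subset \Phi^+ \setminus B$. So from $x \leq_B y \leq_B z$ we have four ``localization'' facts at our disposal:
$$\Phi_x \setminus \Phi_y \subset B,\quad \Phi_y \setminus \Phi_x \subset \Phi^+ \setminus B,\quad \Phi_y \setminus \Phi_z \subset B,\quad \Phi_z \setminus \Phi_y \subset \Phi^+ \setminus B.$$
The claimed inclusions are purely set-theoretic consequences of these, together with the tautology that a root cannot lie in both $B$ and $\Phi^+\setminus B$. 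Concretely, to show $\Phi_x \setminus \Phi_y \subset \Phi_x \setminus \Phi_z$, take $\gamma \in \Phi_x \setminus \Phi_y$; then $\gamma \in B$, and I must rule out $\gamma \in \Phi_z$. If $\gamma \in \Phi_z$ then $\gamma \in \Phi_z \setminus \Phi_y \subset \Phi^+ \setminus B$, contradicting $\gamma \in B$. Hence $\gamma \notin \Phi_z$, i.e. $\gamma \in \Phi_x \setminus \Phi_z$. The other three inclusions are obtained by the same two-line argument, swapping the roles of $B$ and $\Phi^+\setminus B$ and of the pairs $(x,y)$, $(y,z)$: for $\Phi_y \setminus \Phi_x \subset \Phi_z \setminus \Phi_x$ use that an element of $\Phi_y\setminus\Phi_x$ lies in $\Phi^+\setminus B$ and cannot therefore lie in $\Phi_y\setminus\Phi_z\subset B$, so it lies in $\Phi_z$; similarly for the remaining two, which are really the ``reverse order'' versions (recall $\leq_{\Phi^+\ominus B}'$ reverses $\leq_B'$, so $z \leq_{\Phi^+\ominus B} y \leq_{\Phi^+\ominus B} x$, and the third and fourth inclusions are the first and second read in that reversed order).

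For the ``consequently'' clause, I first note the elementary identity $\Phi_x \setminus \Phi_z = (\Phi_x \setminus \Phi_y \setminus \Phi_z) \uplus (\Phi_y \cap \Phi_x \setminus \Phi_z)$ — more usefully, $\Phi_x\setminus\Phi_z \subset (\Phi_x\setminus\Phi_y) \cup (\Phi_y\setminus\Phi_z)$ always holds (a root missing from $\Phi_z$ but present in $\Phi_x$ is either missing from $\Phi_y$ or, if present in $\Phi_y$, lies in $\Phi_y\setminus\Phi_z$). Conversely the inclusions just proved give $\Phi_x\setminus\Phi_y\subset\Phi_x\setminus\Phi_z$ and $\Phi_y\setminus\Phi_z\subset\Phi_x\setminus\Phi_z$, so $(\Phi_x\setminus\Phi_y)\cup(\Phi_y\setminus\Phi_z)\subset\Phi_x\setminus\Phi_z$; combining, $\Phi_x\setminus\Phi_z = (\Phi_x\setminus\Phi_y)\cup(\Phi_y\setminus\Phi_z)$. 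Disjointness of this union is immediate since $\Phi_x\setminus\Phi_y$ is disjoint from $\Phi_y$ hence from $\Phi_y\setminus\Phi_z$. The identity $\Phi_z\setminus\Phi_x = (\Phi_z\setminus\Phi_y)\uplus(\Phi_y\setminus\Phi_x)$ follows by the symmetric argument (or by applying the first identity in the reversed order $z\leq_{\Phi^+\ominus B} y\leq_{\Phi^+\ominus B} x$).

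**Expected obstacle.** There is essentially no obstacle here beyond bookkeeping: the whole lemma is a Venn-diagram argument driven by Lemma~\ref{lem:symmdiff}. The only point requiring a little care is to keep straight which of $B$, $\Phi^+\setminus B$ each difference set is contained in, so that the ``cannot lie in both'' contradictions are applied to the right pairs; organizing the four inclusions into two ``forward'' ones (coming from $x\leq_B y\leq_B z$ directly) and two ``reverse'' ones (coming from the same statement for $\leq_{\Phi^+\ominus B}$) makes this transparent and halves the work.
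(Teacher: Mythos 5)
Your proof is correct and follows the same route as the paper: each inclusion is obtained by taking a root in the given difference set, locating it in $B$ or $\Phi^+\backslash B$ via Lemma~\ref{lem:symmdiff}, and deriving a contradiction from the opposite localization; the paper writes out only the first inclusion and leaves the rest (including the ``consequently'' clause, which you spell out correctly) as similar.
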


\begin{proof}
Take $\alpha\in \Phi_x\backslash \Phi_y$. Then
$\alpha\in B$. If $\alpha\in \Phi_z$ then $\alpha\in
\Phi_z\backslash \Phi_y$ then $\alpha\in \Phi^+\backslash B$. This
is a contradiction. So $\alpha\in \Phi_x\backslash \Phi_z.$ The
other assertions can be proved similarly.
\end{proof}

Let $B$ be a biclosed set in $\Phi^+$ and $x,y\in W$. Denote by $[x,y]_B$ the set $\{w\in W|x\leq_B w\leq_B y\}.$ The following Corollary shows that locally the intervals of a twisted weak order share the same structure of the ordinary weak order, though globally those orders could be very different.

\begin{corollary}\label{invariant}
Suppose $x\leq_B y$. Then
$[x,y]_B=[x,y]_{\Phi_x}=[x,y]_{\Phi^+\backslash \Phi_y}.$
Consequently for any $B$ such that $x\leq_B y$, $[x,y]_B$ is
independent of $B$. In addition these intervals are isomorphic as posets.
\end{corollary}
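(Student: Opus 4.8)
The plan is to fix a pair with $x\leq_B y$ and describe, purely in terms of $\Phi_x$ and $\Phi_y$, both the underlying set of $[x,y]_B$ and the restriction of $\leq_B$ to it, so that $B$ itself never enters the description. Write $P:=\Phi_x\backslash\Phi_y$ and $Q:=\Phi_y\backslash\Phi_x$. By Lemma~\ref{lem:symmdiff} the hypothesis $x\leq_B y$ is exactly the pair of inclusions $P\subseteq B$ and $Q\subseteq\Phi^+\backslash B$; since moreover $P\subseteq\Phi_x$, $Q\subseteq\Phi^+\backslash\Phi_x$ and $P\subseteq\Phi^+\backslash\Phi_y$, $Q\subseteq\Phi_y$ hold trivially, the relations $x\leq_{\Phi_x}y$ and $x\leq_{\Phi^+\backslash\Phi_y}y$ also hold (here both $\Phi_x$, the inversion set of $x$, and $\Phi^+\backslash\Phi_y$ are biclosed in $\Phi^+$, so that $\leq_{\Phi_x}$ and $\leq_{\Phi^+\backslash\Phi_y}$ are defined). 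Thus all three intervals in the statement are meaningful.

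First I would prove that for every $w\in W$,
\[
w\in[x,y]_B\quad\Longleftrightarrow\quad \Phi_x\cap\Phi_y\subseteq\Phi_w\subseteq\Phi_x\cup\Phi_y,
\]
a condition in which $B$ does not appear. For the implication ``$\Rightarrow$'' I would apply Lemma~\ref{intervallemma} to the chain $x\leq_B w\leq_B y$, obtaining $\Phi_x\backslash\Phi_w\subseteq\Phi_x\backslash\Phi_y$ and $\Phi_w\backslash\Phi_x\subseteq\Phi_y\backslash\Phi_x$; a short set-theoretic check turns these into $\Phi_x\cap\Phi_y\subseteq\Phi_w$ and $\Phi_w\subseteq\Phi_x\cup\Phi_y$ respectively. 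For ``$\Leftarrow$'', assuming the ``box'' inclusion I would verify the four inclusions of Lemma~\ref{lem:symmdiff} defining $x\leq_B w$ and $w\leq_B y$: the box condition confines $\Phi_x\backslash\Phi_w$ and $\Phi_w\backslash\Phi_y$ to $P$ and $\Phi_w\backslash\Phi_x$, $\Phi_y\backslash\Phi_w$ to $Q$, and then $P\subseteq B$, $Q\subseteq\Phi^+\backslash B$ complete the check. This already shows $[x,y]_B=[x,y]_{\Phi_x}=[x,y]_{\Phi^+\backslash\Phi_y}$ as sets.

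Next I would upgrade this to an equality of posets. Fix $w_1,w_2\in[x,y]_B$. By the previous step $\Phi_{w_1}$ and $\Phi_{w_2}$ both lie between $\Phi_x\cap\Phi_y$ and $\Phi_x\cup\Phi_y$, so $\Phi_{w_1}\ominus\Phi_{w_2}\subseteq(\Phi_x\cup\Phi_y)\backslash(\Phi_x\cap\Phi_y)=P\uplus Q$. Since $P\subseteq B$ and $Q\subseteq\Phi^+\backslash B$ (hence $Q\cap B=\emptyset$ and $P\cap(\Phi^+\backslash B)=\emptyset$), intersecting the defining inclusions of Lemma~\ref{lem:symmdiff} for $w_1\leq_B w_2$ with $B$ and with $\Phi^+\backslash B$ yields
\[
w_1\leq_B w_2\quad\Longleftrightarrow\quad \Phi_{w_1}\backslash\Phi_{w_2}\subseteq P\ \text{ and }\ \Phi_{w_2}\backslash\Phi_{w_1}\subseteq Q,
\]
which again does not involve $B$. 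Therefore $([x,y]_B,\leq_B)$ depends only on $x$ and $y$; applying this with $B=\Phi_x$ and with $B=\Phi^+\backslash\Phi_y$ shows it coincides with $([x,y]_{\Phi_x},\leq_{\Phi_x})$ and $([x,y]_{\Phi^+\backslash\Phi_y},\leq_{\Phi^+\backslash\Phi_y})$, so the three intervals are equal, in particular isomorphic, as posets. For the link with the ordinary weak order mentioned before the Corollary, I would combine this with the Remark following Lemma~\ref{lem:conjfin}: there $(W,\leq_{\Phi_x}')\cong(W,\leq_\emptyset')$, and under that isomorphism $[x,y]_{\Phi_x}$ maps onto an interval of the ordinary weak order.

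I do not expect a genuinely hard step: everything reduces to manipulations with a Venn diagram on the four regions $\Phi_x\cap\Phi_y$, $P$, $Q$, $\Phi^+\backslash(\Phi_x\cup\Phi_y)$. The only point demanding care is the bookkeeping — invoking Lemma~\ref{intervallemma} for the correct three-term chain, and checking that once the box inclusion pins every relevant symmetric difference inside $P\uplus Q$, each a priori $B$-dependent inclusion collapses to ``$\subseteq P$'' or ``$\subseteq Q$''.
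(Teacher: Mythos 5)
Your proof is correct and follows essentially the same route as the paper's: both rest on Lemma~\ref{intervallemma} to confine the relevant symmetric differences to $(\Phi_x\backslash\Phi_y)\uplus(\Phi_y\backslash\Phi_x)$ and on Lemma~\ref{lem:symmdiff} to translate each order relation into inclusions that no longer involve $B$. Your explicit $B$-free ``box'' characterization $\Phi_x\cap\Phi_y\subseteq\Phi_w\subseteq\Phi_x\cup\Phi_y$ of the interval is a tidy repackaging of the paper's two-way inclusion argument and its separate check that the restricted orders agree, but the substance is identical.
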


\begin{proof}
It is evident
from the definition that $x\leq_{\Phi_x} y$ for all $y\in W$. Take $z\in [x,y]_B$. We have $\Phi_z\backslash
\Phi_y\subset \Phi_x\backslash \Phi_y\subset \Phi_x$ by Lemma \ref{intervallemma}.
We also have $\Phi_y\backslash \Phi_z\subset \Phi_y\backslash \Phi_x\subset
\Phi^+\backslash\Phi_x$  by Lemma \ref{intervallemma}. Hence $z\leq_{\Phi_x} y$.
This shows $[x,y]_B\subset[x,y]_{\Phi_x}$. Conversely take $z\in
[x,y]_{\Phi_x}$. We have $\Phi_z\backslash \Phi_y\subset \Phi_x\backslash
\Phi_y\subset B$ by Lemma \ref{intervallemma}. We also have $\Phi_y\backslash \Phi_z\subset
\Phi_y\backslash \Phi_x\subset \Phi^+\backslash B$  by Lemma \ref{intervallemma}. Hence $z\leq_{B} y$. Similarly one can show $x\leq_B z$. This
shows $[x,y]_B\supset[x,y]_{\Phi_x}$.
It can be proved in the same
manner that $[x,y]_B=[x,y]_{\Phi^+\backslash \Phi_y}$.

Now assume $x\leq_B u\leq_B v\leq_B y$. Then by Lemma \ref{intervallemma}, $\Phi_v\backslash
\Phi_u\subset \Phi_v\backslash \Phi_x\subset \Phi^+\backslash
\Phi_x$. Similarly $\Phi_u\backslash \Phi_v\subset \Phi_x\backslash
\Phi_v\subset \Phi_x$. Hence $u\leq_{\Phi_x} v$. Assume
$x\leq_{\Phi_x}u\leq_{\Phi_x}v\leq_{\Phi_x}y$. Then $\Phi_u\backslash
\Phi_v\subset \Phi_x\backslash \Phi_v\subset \Phi_x\backslash
\Phi_y\subset B$ and $\Phi_v\backslash \Phi_u\subset \Phi_v\backslash
\Phi_x\subset \Phi_y\backslash \Phi_x\subset \Phi^+\backslash B$. So
$u\leq_B v.$ This implies $[x,y]_B$ and $[x,y]_{\Phi_x}$ are
isomorphic as poset. The other isomorphism can be proved in the
similar way.
\end{proof}

\begin{proposition}
Let $A\subset W$ and let $B$
be a  biclosed set in $\Phi^+$. If there exists $u\in W$ such that $u\leq_B
x$ for all $x\in A$ then $\bigcap_{x\in A}[u,x]_B$ admits a unique
maximal element. If there exists $v\in W$ such that $v\geq_B x$ for
all $x\in A$ then $\bigcap_{x\in A}[x,v]_B$ admits a unique minimal
element.
\end{proposition}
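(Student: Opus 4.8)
The plan is to reduce to the untwisted case and then invoke that the ordinary weak order on $W$ is a complete meet semilattice. Throughout I assume $A\neq\emptyset$ (if $A=\emptyset$ the intersection is all of $W$, which in general has neither a greatest nor a least element under $\leq_B$, so the statement should be read as vacuous there). First I would apply Corollary~\ref{invariant} with first argument $u$ and second argument $x$: since $u\leq_B x$ for every $x\in A$, this gives $[u,x]_B=[u,x]_{\Phi_u}$ for every $x\in A$, so
\[
\bigcap_{x\in A}[u,x]_B=\bigcap_{x\in A}[u,x]_{\Phi_u},
\]
and it suffices to treat the case $B=\Phi_u$.

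Next I would transport the problem to the ordinary weak order. By the remark following Lemma~\ref{lem:conjfin} there is a poset isomorphism $\psi\colon(W,\leq_{\Phi_u})\xrightarrow{\ \sim\ }(W,\leq_{\emptyset})$ onto the ordinary weak order. As noted in the proof of Corollary~\ref{invariant}, $u\leq_{\Phi_u}y$ for all $y\in W$, so $u$ is the least element of $(W,\leq_{\Phi_u})$; since the ordinary weak order has $e$ as its unique least element, $\psi(u)=e$. Being an order isomorphism, $\psi$ carries order-intervals to order-intervals, so $\psi([u,x]_{\Phi_u})=[e,\psi(x)]_{\emptyset}=\{w\in W\mid w\leq\psi(x)\}$ for each $x\in A$, and hence $\psi\bigl(\bigcap_{x\in A}[u,x]_{\Phi_u}\bigr)=\bigcap_{x\in A}\{w\mid w\leq\psi(x)\}$ is exactly the set of common lower bounds of $\{\psi(x)\mid x\in A\}$ in the weak order.

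Now I would conclude using that $(W,\leq_{\emptyset})$ is a complete meet semilattice: the meet $m:=\bigwedge_{x\in A}\psi(x)$ exists, and $w$ is a lower bound of every $\psi(x)$ if and only if $w\leq m$; therefore $\bigcap_{x\in A}\{w\mid w\leq\psi(x)\}=[e,m]_{\emptyset}$, which has $m$ as its unique maximal element. Pulling back along $\psi$, the set $\bigcap_{x\in A}[u,x]_B$ has $\psi^{-1}(m)$ as its unique maximal element, which proves the first assertion. The second assertion I would get by the duality $B\leftrightarrow\Phi^+\backslash B$: the set $\Phi^+\backslash B$ is again biclosed in $\Phi^+$, and $\leq_{\Phi^+\backslash B}$ is the reverse of $\leq_B$ (as $\Phi^+\ominus B=\Phi^+\backslash B$), so $v\geq_B x$ for all $x\in A$ says $v\leq_{\Phi^+\backslash B}x$ for all $x\in A$ and $[x,v]_B=[v,x]_{\Phi^+\backslash B}$; applying the first assertion with $B$ replaced by $\Phi^+\backslash B$ and $u$ replaced by $v$ shows that $\bigcap_{x\in A}[x,v]_B=\bigcap_{x\in A}[v,x]_{\Phi^+\backslash B}$ has a unique $\leq_{\Phi^+\backslash B}$-maximal element, i.e.\ a unique $\leq_B$-minimal element.

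I expect the only delicate point to be the second step — arranging (indeed, recognizing that one is forced to have) $\psi(u)=e$ in the isomorphism of the Remark — and this is handled cleanly by the observation that $u$ is the bottom element of $(W,\leq_{\Phi_u})$; everything else is routine bookkeeping with the inversion sets and their symmetric differences.
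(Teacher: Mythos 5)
Your proof is correct and takes essentially the same route as the paper's: reduce to the case $B=\Phi_u$ via Corollary~\ref{invariant}, then invoke the fact that $(W,\leq_{\Phi_u})$ is isomorphic to the ordinary weak order and hence a complete meet semilattice, so the common lower bounds of $A$ form an interval $[u,\bigwedge A]$ with a unique maximal element. The details you add (that $u$ is the bottom of $(W,\leq_{\Phi_u})$, so the isomorphism sends $u$ to $e$, and the duality $B\leftrightarrow\Phi^+\backslash B$ for the second assertion) simply make explicit what the paper leaves implicit.
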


\begin{proof}
We prove the first assertion.
$\bigcap_{x\in A}[u,x]_B\simeq\bigcap_{x\in
A}[u,x]_{\Phi_u}$ by Corollary \ref{invariant}. But $(W,\leq_{\Phi_u})$ is
a complete meet semilattice and we obtain the desired assertion. The
other assertion can be proved similarly.
\end{proof}

\begin{lemma}\label{lowerbound}
Take $B=\Phi_w, w\in W_l$ (the set of infinite
reduced words). Then for $x,y\in W$ one can find $z\in W$ such that $z$
is a lower bound of $x,y$ in $(W,\leq_B)$.
\end{lemma}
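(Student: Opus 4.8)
The plan is to exhibit an explicit common lower bound, namely a sufficiently long prefix of $w$ itself. Write $w=s_1s_2s_3\cdots$ and set $w_k=s_1s_2\cdots s_k\in W$, so that $\Phi_{w_1}\subseteq\Phi_{w_2}\subseteq\cdots$ is an increasing chain with $B=\Phi_w=\bigcup_{k\geq 1}\Phi_{w_k}$. The first thing I would do is record, via Lemma~\ref{lem:symmdiff}, that for any $x\in W$ the relation $w_k\leq_B x$ is equivalent to the pair of inclusions
$$\Phi_{w_k}\setminus\Phi_x\subseteq B\qquad\text{and}\qquad \Phi_x\setminus\Phi_{w_k}\subseteq\Phi^+\setminus B.$$

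Now the first inclusion is automatic, since $\Phi_{w_k}\subseteq\Phi_w=B$. The second inclusion is equivalent to $(\Phi_x\setminus\Phi_{w_k})\cap B=\emptyset$, that is, to $\Phi_x\cap B\subseteq\Phi_{w_k}$. Since $x\in W$, the set $\Phi_x$ is finite, so $\Phi_x\cap B$ is a finite subset of $B=\bigcup_k\Phi_{w_k}$; because the $\Phi_{w_k}$ form an increasing chain, $\Phi_x\cap B\subseteq\Phi_{w_{N_x}}$ for some index $N_x$, and hence $w_k\leq_B x$ for every $k\geq N_x$. Applying this to both $x$ and $y$ and taking $k=\max(N_x,N_y)$, the element $z=w_k$ satisfies $z\leq_B x$ and $z\leq_B y$, which is the claimed lower bound.

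There is essentially no serious obstacle in this argument; the only point requiring (minor) care is that a common lower bound must serve $x$ and $y$ simultaneously, which is handled by passing to the larger of the two thresholds $N_x,N_y$. The computation $l_B(w_k)=l(w_k)-2|\Phi_{w_k}\cap B|=k-2k=-k$ explains conceptually why the prefixes of $w$ are forced downward in the $B$-twisted order and thus are the natural place to look.

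I would also add the remark that this lemma only produces \emph{some} lower bound, not a greatest one; the more substantial goal of showing $(W,\leq_B)$ is a meet semilattice would use it as a starting point, combining it with the earlier Proposition on the existence of a unique maximal element in $\bigcap_{a\in A}[u,a]_B$ applied to $A=\{x,y\}$ and $u=z$, after which one still has to verify that the resulting maximal element of $[z,x]_B\cap[z,y]_B$ dominates every lower bound of $x$ and $y$.
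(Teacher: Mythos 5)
Your proof is correct and follows essentially the same route as the paper: both arguments choose a prefix $z$ of $w$ long enough that $\Phi_z$ contains the finite sets $\Phi_x\cap\Phi_w$ and $\Phi_y\cap\Phi_w$, and then verify the two inclusions of Lemma~\ref{lem:symmdiff}. The only difference is presentational (you phrase the threshold via indices $N_x,N_y$ along the chain $\Phi_{w_k}$, while the paper states the containment directly), so there is nothing to add.
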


\begin{proof}
$\Phi_x\cap \Phi_w$ and $\Phi_y\cap \Phi_w$ are
both finite. So one can find a prefix $z$ of $w$ such that
$\Phi_z\supset (\Phi_x\cap \Phi_w)\cup (\Phi_y\cap
\Phi_w)$. Clearly $\Phi_z\backslash \Phi_x$ and $\Phi_z\backslash
\Phi_y$ are both in $\Phi_w.$ $\Phi_x\backslash \Phi_z\subset
\Phi_x\backslash (\Phi_x\cap \Phi_w)=\Phi_x\backslash
\Phi_w\subset \Phi^+\backslash \Phi_w$. Hence $x\geq_B z$
and similarly $y\geq_B z$.
\end{proof}

\begin{theorem}\label{twistsemilattice}
Take $w\in W_l$ (the set of infinite reduced words). Then
$(W,\leq_{\Phi_w})$ is a non-complete meet semilattice and $(W,\leq_{\Phi^+\backslash\Phi_w})$
is a non-complete join semilattice.
\end{theorem}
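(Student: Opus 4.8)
The plan is to prove the statement for $(W,\leq_{\Phi_w})$; the assertion for $(W,\leq_{\Phi^+\backslash\Phi_w})$ then follows since $\leq_{\Phi^+\ominus\Phi_w}$ is the reverse of $\leq_{\Phi_w}$ (as remarked after the definition of $\leq_B'$) and $\Phi^+\backslash\Phi_w$ and $\Phi^+\ominus\Phi_w$ agree on the relevant structure — more carefully, one simply dualizes the whole argument. So everything reduces to two things: (i) $(W,\leq_{\Phi_w})$ is a meet semilattice, and (ii) it is not a complete meet semilattice.

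For (i), let $x,y\in W$ and set $B=\Phi_w$. By Lemma \ref{lowerbound} there is some $z\in W$ with $z\leq_B x$ and $z\leq_B y$; that is, the set of lower bounds of $\{x,y\}$ is nonempty. I would then invoke the Proposition above with $A=\{x,y\}$ and $u=z$: it gives that $\bigcap_{a\in A}[z,a]_B$ has a unique maximal element, say $m$. The remaining point is to check that $m$ is actually the greatest lower bound of $\{x,y\}$ in all of $(W,\leq_B)$, not merely among elements above $z$. For this, suppose $z'$ is any other lower bound of $\{x,y\}$. Applying Lemma \ref{lowerbound}-style reasoning (or directly: take a common prefix $z''$ of $w$ with $\Phi_{z''}$ large enough to dominate $\Phi_z\cap\Phi_w$, $\Phi_{z'}\cap\Phi_w$, $\Phi_x\cap\Phi_w$, $\Phi_y\cap\Phi_w$), one produces a single lower bound $z''$ of all of $z,z',x,y$ simultaneously. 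Then both $m$ and $z'$ lie in $\bigcap_{a\in\{x,y\}}[z'',a]_B$, which by the Proposition has a unique maximal element $m'$; by construction $m\leq_B m'\leq_B x,y$, and maximality of $m$ in $[z,x]_B\cap[z,y]_B$ forces $m=m'$, whence $z'\leq_B m$. Thus $m=x\wedge y$ and $(W,\leq_B)$ is a meet semilattice. The slightly delicate step here is the "amalgamation of lower bounds" — arguing that any two lower bounds admit a common lower bound that is a prefix of $w$ — but this is exactly the finiteness argument of Lemma \ref{lowerbound} applied to more sets, so it is routine.

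For (ii), non-completeness, I would exhibit a subset with lower bounds but no greatest lower bound — the natural candidate being the set of prefixes of $w$ itself, or rather a cofinal chain in it. Write $w=s_1s_2s_3\cdots$ and let $p_n=s_1s_2\cdots s_n$. Each $p_n\leq_{\Phi_w} p_{n+1}$ since $p_n(\alpha_{s_{n+1}})\in\Phi_w$... wait — under $\leq_{\Phi_w}$ adding an inversion that lies in $B=\Phi_w$ \emph{decreases} $l_B$, so in fact $p_{n+1}\leq_{\Phi_w}p_n$: the chain of prefixes is an infinite \emph{descending} chain. Consider then $A=\{p_n\mid n\geq 0\}$. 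Any $w'\in W$ lying below all $p_n$ in $\leq_{\Phi_w}$ would need $\Phi_{w'}\backslash\Phi_{p_n}\subset\Phi_w$ and $\Phi_{p_n}\backslash\Phi_{w'}\subset\Phi^+\backslash\Phi_w$ for all $n$; since $\Phi_{p_n}\subset\Phi_w$, the second condition forces $\Phi_{p_n}\subset\Phi_{w'}$ for every $n$, hence $\Phi_w=\bigcup_n\Phi_{p_n}\subset\Phi_{w'}$, and then $\Phi_{w'}\backslash\Phi_{p_n}$ must be a finite subset of $\Phi_w$, which is impossible for an infinite $\Phi_w$ unless $\Phi_{w'}=\Phi_w$; but $\Phi_w$ is not the inversion set of a finite-length element, so no such $w'\in W$ exists. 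Hence $A$ has no lower bound in $(W,\leq_{\Phi_w})$ at all, so the semilattice is not complete. (If one prefers a subset that \emph{does} have a lower bound, replace $A$ by $\{e\}\cup\{p_n\}$ won't help since $e$ fails; instead take any subset with lower bounds but with an infinite descending chain of lower bounds and no infimum — e.g. fix $x$ above all $p_n$ and use $\{p_n\}$, whose lower bounds are exactly the $p_m$, which has no minimum.) I expect (ii) to be the part requiring the most care, precisely in pinning down which subset cleanly has no infimum while making transparent use of the hypothesis $w\in W_l$ — the point being that the "missing" infimum is the infinite reduced word $w$ itself, which is not an element of $W$.
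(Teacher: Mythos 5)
Your proposal is correct and follows essentially the same route as the paper: both reduce the meet-semilattice claim to the completeness of the ordinary weak order via Lemma \ref{lowerbound} and the interval-invariance results (Corollary \ref{invariant} and the ensuing Proposition), and both witness non-completeness with the descending chain of prefixes of $w$, which has no lower bound in $W$. The only difference is presentational: you argue directly that the unique maximal common lower bound is the greatest one (via an amalgamated lower bound $z''$), whereas the paper gets the same conclusion by deriving a contradiction from two distinct maximal lower bounds; your version makes explicit a finiteness point the paper leaves implicit.
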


\begin{proof}
We prove the first assertion.
Let $x,y\in W$. Suppose that $\{u|u\leq_{\Phi_w} x, u\leq_{\Phi_w} y\}$ has two
different maximal elements $s,t$. (This set is not empty by Lemma \ref{lowerbound}.) Then $s,t$ has a lower bound $v$ by Lemma \ref{lowerbound} again. By corollary \ref{invariant}
$[v,x]_{\Phi_w}\cap [v,y]_{\Phi_w}$ is isomorphic to
$[v,x]_{\Phi_v}\cap [v,y]_{\Phi_v}$. But in the latter $s,t$ have
an upper bound as $(W,\leq_{\Phi_v})$ is a complete meet semilattice and then so do in the former. This is a
contradiction.
To see that such a meet semilattice is not complete, let $w=s_1s_2s_3\cdots$. Then
the chain $s_1\geq_{\Phi_{w}}s_1s_2\geq_{\Phi_{w}}s_1s_2s_3\geq \cdots$ does not have a lower bound.
The second dual assertion can be proved similarly.
\end{proof}

In the remaining part of this section we shall show that for an (irreducible) affine Weyl group, the converse of Theorem \ref{twistsemilattice}, i.e. if $\leq_B$ is a meet semilattice then $B=\Phi_w$ with $w\in W\cup W_l$, also holds. In what follows an affine Weyl group is assumed to be an irreducible one. Along the way, we also give a classification of the 2 closure biclosed sets that come from infinite reduced words for affine Weyl groups. The proof given here is shorter than the one given in \cite{wang}.

An element $w$ of a Coxeter group is said to be straight if $l(w^n)=|nl(w)|$ for all $n\ge 1$.
Let $w$ be a straight element. Then $w^{\infty}:=www\cdots$ is a well-defined infinite reduced word.

Now let $W$ be an irreducible Weyl group with the root system $\Phi$ and $V=\mathbb{R}\Phi$.
A vector $\lambda\in V$ is called a coweight if $(\lambda,\alpha)\in \mathbb{Z}$ for all $\alpha\in \Phi.$ The set of coweights is called the coweight lattice. This is a free Abelian group. Its subgroup generated by the coroots is called the coroot lattice. The index of the coroot lattice in the coweight lattice is finite and called the connection index.

\begin{lemma}\label{infinitewordsform}
Let $\gamma$ be an element in the coroot lattice. Then $t_{\gamma}$ is straight. Moreover $\Phi_{t_{\gamma}^{\infty}}=\widehat{\Psi^+_{L,\emptyset}}$ for some biclosed set $\Psi^+_{L,\emptyset}$ in $\Phi.$
\end{lemma}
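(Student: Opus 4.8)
The plan is to prove both assertions by a direct computation in the affine root system using the semidirect product structure $\widetilde{W}=W\ltimes T$ established in Section~\ref{classificationsection}. First I would verify that $t_\gamma$ is straight. Since $\gamma$ lies in the coroot lattice, $t_\gamma^n=t_{n\gamma}$, so it suffices to compute $l(t_\lambda)$ for a translation $t_\lambda$ by a coroot-lattice element $\lambda$ and to check that $l(t_{n\gamma})=|n|\,l(t_\gamma)$. The standard length formula for affine Weyl groups gives $l(t_\lambda)=\sum_{\alpha\in\Phi^+}|(\lambda,\alpha)|$ (this follows from $\widetilde{\Phi}^+=\widehat{\Phi}$ and the description of $\widehat{\alpha}$ in Section~\ref{classificationsection}: the inversions of $t_\lambda$ are exactly the affine roots $\pm\alpha+n\delta$ with $0\le n<(\lambda,\alpha)$ for $\alpha\in\Phi^+$ with $(\lambda,\alpha)>0$, and symmetrically). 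Hence $l(t_{n\gamma})=\sum_{\alpha\in\Phi^+}|n|\,|(\gamma,\alpha)|=|n|\,l(t_\gamma)$, so $t_\gamma$ is straight and $t_\gamma^\infty=t_\gamma t_\gamma t_\gamma\cdots$ is a well-defined infinite reduced word.

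Next I would identify the inversion set. Writing $\Phi_{t_\gamma^\infty}=\bigcup_{k\ge1}\Phi_{t_{k\gamma}}$ and using the description of $\Phi_{t_{k\gamma}}$ above, an affine root $\alpha+n\delta$ (with $\alpha\in\Phi$, $n\ge0$, normalized as in Section~\ref{classificationsection}) lies in $\Phi_{t_\gamma^\infty}$ iff it lies in $\Phi_{t_{k\gamma}}$ for some $k$. For $\alpha\in\Phi^+$ with $(\gamma,\alpha)>0$: every $\alpha+n\delta$, $n\ge0$, eventually appears (take $k$ large), so $\widehat{\alpha}\subset\Phi_{t_\gamma^\infty}$. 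For $\alpha\in\Phi^+$ with $(\gamma,\alpha)<0$, equivalently $\alpha\in\Phi^-$ read the other way: then $-\alpha+n\delta\in\widehat{-\alpha}$ with $-\alpha\in\Phi^-$ eventually appears, so $\widehat{-\alpha}\subset\Phi_{t_\gamma^\infty}$. For $\alpha$ with $(\gamma,\alpha)=0$: $t_{k\gamma}$ fixes $\alpha+n\delta$ for all $k$, so no multiple of $\delta$ added to $\pm\alpha$ ever becomes an inversion. Setting $L:=\{\alpha\in\Phi\mid(\gamma,\alpha)=0\}=\Phi\cap\gamma^\perp$, a parabolic (hence 2-closure-biclosed, indeed the ``$\mathbb{R}\Delta_2$'' part is empty) subsystem, and letting $\Psi^+$ be the positive system of $\Phi$ given by $\{\alpha\in\Phi\mid(\gamma,\alpha)>0\}\cup\Psi_L^+$ for some fixed positive system $\Psi_L^+$ of $L$, one checks that $\Psi^+_{L,\emptyset}=\Psi^+\setminus\mathbb{R}_{\ge0}L$ is precisely $\{\alpha\in\Phi\mid(\gamma,\alpha)>0\}$, and that $\widehat{\Psi^+_{L,\emptyset}}$ (using the hat-operation relative to the \emph{fixed} positive system $\Phi^+$, after re-expressing $\Psi^+$ in terms of $\Phi^+$) equals $\Phi_{t_\gamma^\infty}$ by the case analysis just performed. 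A small amount of care is needed here because the hat construction in Section~\ref{classificationsection} is defined relative to $\Phi^+$ and its highest root, whereas the natural description of $\Phi_{t_\gamma^\infty}$ is in terms of the sign of $(\gamma,-)$; the roots where $(\gamma,\alpha)>0$ but $\alpha\in\Phi^-$ contribute $\widehat{\alpha}=\{\alpha+(n+1)\delta\}$, and one must confirm these are exactly the elements of $\widehat{\Psi^+_{L,\emptyset}}$ of that form. That bookkeeping---reconciling the two positive systems and confirming the biclosedness of $\Psi^+_{L,\emptyset}$ via the orthogonality $\Delta_1\perp\Delta_2$ with $\Delta_2=\emptyset$ and $\Delta_1$ a subset of simple roots of $\Psi^+$ lying in $L^\perp$---is the main obstacle; the rest is routine.

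Finally, I would remark that the case $L=\Phi$ (i.e. $\gamma=0$) is degenerate and can be excluded or handled trivially, and that when $\gamma\neq0$ the set $L$ is a proper parabolic subsystem so $\Psi^+_{L,\emptyset}$ is a genuine (possibly non-positive-system) biclosed set in $\Phi$, which is what the statement asserts. To present the argument cleanly I would isolate the length formula $l(t_\lambda)=\sum_{\alpha\in\Phi^+}|(\lambda,\alpha)|$ and the inversion-set description of $t_\lambda$ as a preliminary observation (citing \cite{Kac} or standard references), then deduce straightness in one line and the inversion-set identity by the three-case analysis above.
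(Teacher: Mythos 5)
Your proposal is correct, and its core computation --- deciding, for each $\beta+n\delta\in\widehat{\Phi}$, whether some power of $t_{\gamma}$ sends it to a negative root, and splitting into the three cases $(\beta,\gamma)>0$, $(\beta,\gamma)=0$, $(\beta,\gamma)<0$ --- is exactly the computation in the paper's proof. The two write-ups differ only in how they package the conclusions. For straightness, the paper reads off from the same computation that $\Phi_{t_{\gamma}^{k}}\subset\Phi_{t_{\gamma}^{k+1}}$ for all $k$, which is equivalent to the length additivity you extract from $l(t_{\lambda})=\sum_{\alpha\in\Phi^{+}}|(\lambda,\alpha)|$; these are the same argument in different clothing. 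For identifying $\Phi_{t_{\gamma}^{\infty}}=\widehat{\{\zeta\in\Phi\mid(\zeta,\gamma)>0\}}$ as some $\widehat{\Psi^{+}_{L,\emptyset}}$, the paper takes a shortcut: it observes that $\{\zeta\in\Phi\mid(\zeta,\gamma)>0\}$ is biclosed in $\Phi$, invokes the classification $\Psi^{+}_{\Delta_1,\Delta_2}$ of biclosed sets in $\Phi$ quoted in Section~\ref{classificationsection}, and rules out $\Delta_2\neq\emptyset$ because $\lambda$ and $-\lambda$ cannot both pair positively with $\gamma$. You instead construct the positive system $\Psi^{+}$ and the subset $\Delta_1$ of its simple system by hand; this is more self-contained (it avoids the classification theorem) but costs you the bookkeeping you flag, together with the standard fact that $\gamma^{\perp}\cap\Phi$ is a standard parabolic subsystem for a suitably chosen positive system. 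One small slip in that discussion: $\Delta_1$ should consist of the simple roots of $\Psi^{+}$ lying in $L=\gamma^{\perp}\cap\Phi$, not in $L^{\perp}$. Either finishing step is acceptable, and both yield the stated form with $\Delta_2=\emptyset$.
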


\begin{proof}
Take $\beta+n\delta\in \widehat{\Phi}$. Then
$$t_{-\gamma}^k(\beta+n\delta)=\beta+(n+k(\beta,-\gamma))\delta.$$
This is a negative root if
$$n<k(\beta,\gamma)$$
or $n=k(\beta,\gamma)$ and $\beta\in \Phi^-.$

If $(\beta,\gamma) < 0$, then the action of $t_{\gamma}$ will increase the
coefficient of $\delta$. So $\beta+n\delta$ will never be made negative by any power of $t_{\gamma}$.

If $(\beta,\gamma)=0$, then $t_{\gamma}$ will stabilize $\beta+n\delta$. So the power of $t_{\gamma}$ will never
make $\beta+n\delta$ negative.

If $(\beta,\gamma)>0$, then eventually $n < k(\beta,\gamma)$ when $k$ is sufficiently large. Then $\beta+n\delta$ is eventually made negative.

So we have $\Phi_{t_{\gamma}^k}\subset \Phi_{t_{\gamma}^{k+1}}$
for all $k\geq 1$, which shows $t_{\gamma}$ is straight.

By the above, $\Phi_{t_{\gamma}^{\infty}}=\widehat{\{\zeta|\zeta\in \Phi, (\zeta,\gamma)>0\}}$.  It is easy to see that
$\{\zeta|\zeta\in \Phi, (\zeta,\gamma)>0\}$ is biclosed in $\Phi.$  But $\{\zeta|\zeta\in \Phi, (\zeta,\gamma)>0\}$ cannot be of the form
$\Psi^+_{L,M}$ where $M\neq \emptyset$. If not, take $\lambda$ and $-\lambda$ such that $\lambda\in M$ their inner products with $\gamma$
cannot be both positive.
\end{proof}

\begin{lemma}\label{infinitewordsformtwo}
Any biclosed set $\widehat{\Psi^+_{L,\emptyset}}$ in $\widehat{\Phi}$ is of the form $\Phi_{t_{\gamma}^{\infty}}$ for some $ \gamma$ in the coroot lattice.
\end{lemma}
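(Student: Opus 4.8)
The goal is a converse to Lemma \ref{infinitewordsform}: every biclosed set of the form $\widehat{\Psi^+_{L,\emptyset}}$ in $\widehat{\Phi}$ arises as $\Phi_{t_\gamma^\infty}$ for a suitable $\gamma$ in the coroot lattice. The strategy is to read the proof of Lemma \ref{infinitewordsform} backwards. There we saw that, for $\gamma$ in the coroot lattice, $\Phi_{t_\gamma^\infty} = \widehat{\{\zeta\in\Phi \mid (\zeta,\gamma)>0\}}$. So it suffices to show that the biclosed set $\Psi^+_{L,\emptyset}$ in the finite root system $\Phi$ can be written as $\{\zeta\in\Phi\mid(\zeta,\gamma)>0\}$ for some coroot-lattice element $\gamma$. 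Recall from Section \ref{classificationsection} that $\Psi^+_{L,\emptyset} = \Psi^+\setminus\mathbb{R}_{\ge 0}L$, i.e. the positive system $\Psi^+$ with the (parabolic) subset of roots spanned by the orthogonal simple subset $L$ removed.

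First I would reduce to the case $L=\emptyset$, i.e. to showing that every positive system $\Psi^+$ of $\Phi$ equals $\{\zeta\in\Phi\mid(\zeta,\gamma)>0\}$ for some $\gamma$ in the coroot lattice. This is standard: a positive system is cut out by a generic hyperplane, i.e. by any vector $\mu$ in the interior of the corresponding Weyl chamber with $(\zeta,\mu)\neq 0$ for all $\zeta\in\Phi$; and since the coroot lattice spans $V=\mathbb{R}\Phi$ and is a full lattice, it meets the open chamber (scaling a given interior point and rounding to a nearby lattice point, or invoking density of $\mathbb{Q}$-points), so we may take $\gamma\in$ coroot lattice with $(\zeta,\gamma)\neq 0$ for all $\zeta$ and $\{(\zeta,\gamma)>0\} = \Psi^+$.

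Next, to handle general $L$: I want $\gamma$ in the coroot lattice with $(\lambda,\gamma)=0$ for all $\lambda\in L$ and $(\zeta,\gamma)>0$ for all $\zeta\in\Psi^+\setminus\mathbb{R}_{\ge 0}L$. Here the orthogonality of $L$ to the rest of the simple system is what makes it work: write $\Delta$ for the simple system of $\Psi^+$, $\Delta = L \uplus L^c$ with $(L,L^c)=0$ (this orthogonality is part of the hypothesis on $\Psi^+_{L,\emptyset}$ — actually the definition only requires $\Delta_1,\Delta_2$ mutually orthogonal, so with $\Delta_2=\emptyset$ there is no constraint; but $L$ is automatically a "facet" here). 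The subspace $V_L=\mathbb{R}L$ and its orthogonal complement $V_L^\perp$ decompose $V$, and $\Phi\cap V_L^\perp$ contains $L^c$; a positive root lies in $\mathbb{R}_{\ge 0}L$ exactly when it lies in $V_L$. Choosing $\gamma$ in the coroot lattice lying in $V_L^\perp$ and generic with respect to the finite set of projections of roots to $V_L^\perp$, and in the correct chamber, gives $(\zeta,\gamma)>0$ precisely for $\zeta\in\Psi^+$ with nonzero projection to $V_L^\perp$, i.e. for $\zeta\in\Psi^+\setminus\mathbb{R}_{\ge 0}L$, while $(\lambda,\gamma)=0$ for $\lambda\in\mathbb{R}L\cap\Phi$. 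Then $\{\zeta\in\Phi\mid(\zeta,\gamma)>0\} = \Psi^+_{L,\emptyset}$ and Lemma \ref{infinitewordsform} finishes.

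The main obstacle is the lattice-point genericity argument in the presence of the constraint $\gamma\in V_L^\perp$: I need a coroot-lattice point that simultaneously lies in a rational subspace ($V_L^\perp$, which is rational since $L\subseteq\Delta$) and avoids finitely many rational hyperplanes within that subspace and lies in a specified open cone. The clean way is: $V_L^\perp\cap(\text{coroot lattice})$ is a full lattice in the rational subspace $V_L^\perp$ (because $V_L^\perp$ is spanned by roots — e.g. it contains the parabolic root subsystem generated by $L^c$, or at worst one argues with the dual lattice), hence its $\mathbb{R}$-span is all of $V_L^\perp$, and a full lattice in a vector space meets every nonempty open subset after scaling — in particular the relatively open chamber piece. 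One should double-check that $V_L^\perp$ really is spanned by coroots (equivalently that $\Phi\cap V_L^\perp$ spans it), which holds because $L$ is a subset of the simple system and parabolic subsystems behave well; if that subtlety needs care in the non-simply-laced case, an alternative is to take $\gamma$ to be a suitable non-negative integer combination of the fundamental coweights dual to $L^c$, scaled to land in the coroot lattice using the connection index. This is where I expect the write-up to need the most attention.
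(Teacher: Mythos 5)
Your proposal is correct, and its endgame is the paper's proof: the paper simply picks $\gamma'$ in the coweight lattice with $(\gamma',\alpha)=0$ for all $\alpha\in L$ and $(\gamma',\alpha)>0$ for all $\alpha\in\Delta\backslash L$ (a positive combination of the basis dual to $\Delta\backslash L$), sets $\gamma=n\gamma'$ with $n$ the connection index, and invokes Lemma \ref{infinitewordsform} --- exactly the ``alternative'' you offer in your final sentence. Your primary route (find a coroot-lattice point in the relatively open cone $\{v\in V_L^{\perp}\mid (v,\zeta)>0 \text{ for all } \zeta\in\Psi^+\backslash\mathbb{R}_{\geq 0}L\}$, where $V_L=\mathbb{R}L$) also works: the cone is nonempty because it contains the $\gamma'$ above, and the intersection of the coroot lattice with $V_L^{\perp}$ is a full lattice in $V_L^{\perp}$ because the linear forms $v\mapsto(v,\alpha)$, $\alpha\in L$, take integer values on the coroot lattice. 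However, one sub-claim in your write-up is false as stated: $\Phi\cap V_L^{\perp}$ need \emph{not} contain $L^c=\Delta\backslash L$, since the definition of $\Psi^+_{\Delta_1,\Delta_2}$ imposes orthogonality only between $\Delta_1$ and $\Delta_2$, and here $\Delta_2=\emptyset$, so $L$ carries no orthogonality relation to the remaining simple roots. Hence you cannot justify the full-lattice claim by saying $V_L^{\perp}$ contains the parabolic subsystem generated by $L^c$; the rationality argument (or the connection-index scaling you already flagged) is the correct fix, and you rightly identified this as the delicate point. Also, the preliminary reduction to $L=\emptyset$ is never used in your general argument and can be dropped. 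Net effect: your approach buys nothing beyond the paper's two-line construction, but it is sound once the justification of the lattice-point existence is routed through integrality rather than through the (false) containment of $L^c$ in $V_L^{\perp}$.
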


\begin{proof}
Pick $\gamma'$ in the coweight lattice such that $(\gamma',\alpha)=0$ for all
$\alpha\in L$ and $(\gamma',\alpha)>0$ for all $\alpha\in \Delta\backslash L$ where $\Delta$ is the simple system in $\Psi$. Let $n$ be
the connection index and $\gamma=n\gamma'$. Then $t_{\gamma}$ has the desired property by Lemma \ref{infinitewordsform}.
\end{proof}

\begin{proposition}\label{infinitewordschar}
A biclosed set in the set of positive roots $\widehat{\Phi}$ of an affine Weyl group $\widetilde{W}$ is the inversion set of an element in the group or an infinite reduced word if and only if it is of the form $w\cdot \widehat{\Psi^+_{\Delta_1,\emptyset}}$ where $\Psi^+$ is a positive system of $\Phi$  and $\Delta_1$ is a subset of the simple system of $\Psi^+.$
\end{proposition}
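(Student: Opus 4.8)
The plan is to prove both implications; throughout I use that every element of $\widetilde W$ fixes $\delta$ and that every root of $\widetilde W$ is uniquely $\nu+m\delta$ with $\nu\in\Phi$ and $m\in\mathbb Z$, positive precisely when $m\geq 1$, or $m=0$ and $\nu\in\Phi^+$. For the ``if'' direction, Lemma \ref{infinitewordsformtwo} shows $\widehat{\Psi^+_{\Delta_1,\emptyset}}=\Phi_{t_\gamma^\infty}$ for some $\gamma$ in the coroot lattice (with $\Phi_{t_0^\infty}=\emptyset$), so $\widehat{\Psi^+_{\Delta_1,\emptyset}}$ is already the inversion set of an element of $\widetilde W$ or of an element of $\widetilde W_l$ (the set of infinite reduced words of $\widetilde W$). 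It therefore suffices to show that the collection of inversion sets of elements of $\widetilde W\cup\widetilde W_l$ is stable under the action $B\mapsto w\cdot B$; as this is a group action and $w$ is a product of simple reflections, one reduces to $w=s\in S$. Let $y\in\widetilde W\cup\widetilde W_l$. If $\alpha_s\notin\Phi_y$, then $sy$ is again reduced (each finite prefix of $y$ omits $\alpha_s$ from its inversion set), so $\Phi_{sy}=\{\alpha_s\}\uplus s\Phi_y$, and unwinding the formula for $s\cdot(-)$ gives $s\cdot\Phi_y=\{\alpha_s\}\uplus s\Phi_y=\Phi_{sy}$. If $\alpha_s\in\Phi_y$, then (since $\alpha_s$ then lies in $\Phi_{y_j}$ for all large prefixes $y_j$ of $y$, making $s$ a left descent of each) one can rewrite $y$ with a reduced expression beginning with $s$: $y=sy'$ with $y'\in\widetilde W\cup\widetilde W_l$ and $\Phi_y=\{\alpha_s\}\uplus s\Phi_{y'}$, whence $s\cdot\Phi_y=s(\Phi_y\setminus\{\alpha_s\})=\Phi_{y'}$. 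In both cases $s\cdot\Phi_y$ is again such an inversion set.

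For the ``only if'' direction, the classification recalled in Subsection \ref{classificationsection} lets us write $B=w\cdot\widehat{\Psi^+_{\Delta_1,\Delta_2}}$, with $B$ differing from $\widehat{\Psi^+_{\Delta_1,\Delta_2}}$ by only finitely many roots. Hence for each $\alpha\in\Phi$ the sets $\widehat\alpha\cap B$ and $\widehat\alpha\cap\widehat{\Psi^+_{\Delta_1,\Delta_2}}$ differ only finitely, and the latter is $\widehat\alpha$ if $\alpha\in\Psi^+_{\Delta_1,\Delta_2}$ and is empty otherwise; therefore $I_B=\Psi^+_{\Delta_1,\Delta_2}$. Since $\mathbb R\Delta_2\cap\Phi\subseteq\Psi^+_{\Delta_1,\Delta_2}$ and this set is stable under negation, it is enough to prove: if $B=\Phi_x$ for some $x\in\widetilde W\cup\widetilde W_l$, then $I_B$ contains no pair $\{\mu,-\mu\}$ of opposite roots, for this forces $\Delta_2=\emptyset$ and hence $B=w\cdot\widehat{\Psi^+_{\Delta_1,\emptyset}}$ with $\Delta_1$ a subset of the simple system of $\Psi^+$, as required.

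This last point is the heart of the proof and the only place the affine geometry genuinely enters. I claim more generally that for any $x\in\widetilde W\cup\widetilde W_l$ the set $\{\nu\in\Phi:(\nu+\mathbb R\delta)\cap\Phi_x\neq\emptyset\}$ contains no such pair; as it contains $I_B$ this suffices. Suppose $\mu\in\Phi^+$ and $\mu+N_0\delta,\,-\mu+M_0\delta\in\Phi_x$; since these are positive roots of $\widetilde W$ we have $N_0\geq 0$ and $M_0\geq 1$. Writing $x=s_1s_2\cdots$ with prefixes $x_j=s_1\cdots s_j$ (the case $x\in\widetilde W$ is the same with finitely many prefixes), $\Phi_x=\bigcup_j\Phi_{x_j}$, so both roots lie in a common $\Phi_{x_j}$; equivalently $x_j^{-1}(\mu+N_0\delta)$ and $x_j^{-1}(-\mu+M_0\delta)$ are negative. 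Putting $x_j^{-1}(\mu)=\nu+c\delta$ with $\nu\in\Phi$, $c\in\mathbb Z$, and using that $x_j$ fixes $\delta$, these two roots are $\nu+(c+N_0)\delta$ and $-\nu+(M_0-c)\delta$; negativity of the first forces $c+N_0\leq 0$ and of the second $M_0-c\leq 0$, so $c\leq-N_0\leq 0$ and $c\geq M_0\geq 1$, a contradiction. Thus $\Delta_2=\emptyset$, completing the proof. The main obstacle is precisely isolating this $\delta$-coefficient computation; the remaining steps are bookkeeping with Dyer's classification and with inversion sets of (possibly infinite) reduced words.
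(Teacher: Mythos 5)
Your proof is correct, and its overall skeleton matches the paper's: the ``if'' direction rests on Lemma \ref{infinitewordsformtwo} together with compatibility of the dot action with inversion sets, and the ``only if'' direction rests on Dyer's classification together with the impossibility of $\Delta_2\neq\emptyset$. The two pivotal steps, however, are argued differently. First, you actually prove the identity $w\cdot\Phi_u=\Phi_{wu}$ (including for infinite reduced words) by reducing to a single simple reflection and splitting on whether $\alpha_s\in\Phi_y$, whereas the paper invokes this as an ``easy fact'' without proof; your version is more self-contained, modulo one small gloss --- in the case $\alpha_s\in\Phi_y$ with $y$ infinite you should say why the left quotients $y_j'$ of the prefixes cohere into a single infinite reduced word $y'$ (their inversion sets form an increasing chain of finite biclosed sets with union $s(\Phi_y\setminus\{\alpha_s\})$), which is routine. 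Second, and more substantively, where the paper rules out $\widehat{\Psi^+_{\Delta_1,\Delta_2}}=\Phi_w$ with $\Delta_2\neq\emptyset$ by using $2$-closure of a finite inversion set $\Phi_u$ to generate the infinite family $\alpha+(2m+n)\delta,\ \alpha+(3m+2n)\delta,\dots$ inside $\Phi_u$, you pass through $I_B$ and show directly that no finite prefix $x_j$ can invert both $\mu+N_0\delta$ and $-\mu+M_0\delta$, by tracking the $\delta$-coefficient $c$ of $x_j^{-1}(\mu)$ and deriving the incompatible inequalities $c\leq -N_0\leq 0$ and $c\geq M_0\geq 1$; equivalently, the two images would be negative roots summing to $(N_0+M_0)\delta$, whose $\delta$-coefficient is positive. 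Your mechanism is more elementary in that it uses only the $\delta$-grading and the fact that $\widetilde{W}$ fixes $\delta$, avoiding the closure axiom entirely, while the paper's closure argument is the one that transfers verbatim to any finite biclosed set; both hinge on the same reduction to a common finite prefix, and both are valid.
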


\begin{proof}
It follows from Lemma \ref{infinitewordsformtwo} and the easy fact  $w\cdot\Phi_u=\Phi_{wu}, w\in \widetilde{W}$ that $w\cdot \widehat{\Psi^+_{\Delta_1,\emptyset}}$ is  an inversion set. Now because of the classification of the biclosed sets in $\widehat{\Phi}$, we only have to show that $w\cdot \widehat{\Psi^+_{\Delta_1,\Delta_2}}$ with $\Delta_2\neq \emptyset$ is not from an inversion set. Again because $w\cdot\Phi_u=\Phi_{wu}$, it suffices to show that only for $\widehat{\Psi^+_{\Delta_1,\Delta_2}}$. Suppose to the contrary that $\widehat{\Psi_{\Delta_1,\Delta_2}^+}=\Phi_w$ where $w$ is an infinite reduced
word or a group element. Assume $\alpha\in \Delta_2$. Then $\alpha+m\delta$ and $-\alpha+n\delta$
are both in $\widehat{\Psi^+_{\Delta_1,\Delta_2}}$ for some $m,n\in
\mathbb{Z}_{\geq 1}$. Then they must be both in some $\Phi_u, u\in
\widetilde{W}$. Because $\Phi_u$ is biclosed so we have
$$2(\alpha+m\delta)+(-\alpha+n\delta)=\alpha+(2m+n)\delta\in \Phi_u$$
$$3(\alpha+m\delta)+2(-\alpha+n\delta)=\alpha+(3m+2n)\delta\in \Phi_u$$
$$\cdots$$

This implies $\Phi_u$ is infinite which is impossible.
\end{proof}

\begin{remark}
The above proposition indeed shows that if a biclosed set $B$ in $\widehat{\Phi}$ is not an inversion set, then there exists some $\alpha+k\delta$ and $-\alpha+t\delta, k,t>0,\alpha\in \Phi$ both contained in $B$. This is clear if $B$ is of the form $\widehat{\Psi^+_{\Delta_1,\Delta_2}}$ (take $\alpha\in \Delta_2$). Otherwise $B=w\cdot \widehat{\Psi^+_{\Delta_1,\Delta_2}}$ for some $\Psi^+,\Delta_1,\Delta_2$ and $w\in W'$ where $W'$ is the reflection subgroup as described in the end of the section \ref{classificationsection}. In this case $|B\ominus\widehat{\Psi^+_{\Delta_1,\Delta_2}}|$ is finite and the assertion also holds.
\end{remark}

\begin{corollary}\label{lattice}
Let $\widetilde{W}$ be an affine
Weyl group (with the corresponding Weyl group $W$) and $\Psi^+$ be a positive system of $\Phi$ (the root system
of $W$). Then $(\widetilde{W},\leq_{\widehat{\Psi^+}})$ is a lattice.
\end{corollary}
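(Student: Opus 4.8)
The strategy is to show separately that $(\widetilde W, \leq_{\widehat{\Psi^+}})$ is a meet semilattice and a join semilattice; by the definition of a lattice this suffices. Both facts will come out of Theorem \ref{twistsemilattice} once we know that $\widehat{\Psi^+}$ and its complement in $\widetilde\Phi^+$ are \emph{both} inversion sets of infinite reduced words. Write $\Psi^- = -\Psi^+$.

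First I would record the elementary identity $\widetilde\Phi^+ = \widehat\Phi = \widehat{\Psi^+} \uplus \widehat{\Psi^-}$: the operation $\widehat{\,\cdot\,}$ carries a partition of $\Phi$ to a partition of $\widehat\Phi$, since the sets $\widehat\alpha$, $\alpha\in\Phi$, are pairwise disjoint (distinguished by their $V$-component) and $\Phi = \Psi^+ \uplus \Psi^-$. Hence $\widetilde\Phi^+ \setminus \widehat{\Psi^+} = \widehat{\Psi^-}$. Since $\Psi^-$ is again a positive system of $\Phi$, both $\widehat{\Psi^+}$ and $\widehat{\Psi^-}$ are biclosed sets in $\widehat\Phi$ of the form $\widehat{\Psi^+_{\emptyset,\emptyset}}$, so by Proposition \ref{infinitewordschar} (equivalently, Lemma \ref{infinitewordsformtwo}) each is the inversion set of an element of $\widetilde W$ or of an infinite reduced word; being infinite, each must in fact come from an infinite reduced word. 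Thus there exist $w, w' \in W_l$ with $\Phi_w = \widehat{\Psi^+}$ and $\Phi_{w'} = \widehat{\Psi^-}$ (explicitly, by Lemmas \ref{infinitewordsform}--\ref{infinitewordsformtwo} one may take $w = t_\gamma^\infty$ with $\gamma$ a suitable dilate of a coweight regular in the chamber determined by $\Psi^+$, and $w' = t_{-\gamma}^\infty$).

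Now apply Theorem \ref{twistsemilattice} to $\widetilde W$. From $w$: $(\widetilde W, \leq_{\widehat{\Psi^+}}) = (\widetilde W, \leq_{\Phi_w})$ is a non-complete meet semilattice. From $w'$ together with $\widetilde\Phi^+\setminus\Phi_{w'} = \widehat{\Psi^+}$: $(\widetilde W, \leq_{\widehat{\Psi^+}})$ is a non-complete join semilattice. (Alternatively one can invoke the remark that $\leq_{\widetilde\Phi^+\ominus\widehat{\Psi^+}}$ is the reverse order of $\leq_{\widehat{\Psi^+}}$; since $\widetilde\Phi^+\ominus\widehat{\Psi^+} = \widehat{\Psi^-} = \Phi_{w'}$, the reversed order is a meet semilattice, so $\leq_{\widehat{\Psi^+}}$ is a join semilattice.) Being both a meet and a join semilattice, $(\widetilde W, \leq_{\widehat{\Psi^+}})$ is a lattice — in fact a non-complete one. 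The argument has no genuine obstacle; the only step that needs a sentence of care is the claim that the complement of $\widehat{\Psi^+}$ inside $\widetilde\Phi^+$ is again of the form $\widehat{(\text{positive system})}$, after which the two halves of Theorem \ref{twistsemilattice} apply verbatim.
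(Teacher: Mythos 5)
Your proposal is correct and follows essentially the same route as the paper: observe that $\widetilde{\Phi}^+\setminus\widehat{\Psi^+}=\widehat{\Psi^-}$, identify both $\widehat{\Psi^+}$ and $\widehat{\Psi^-}$ as inversion sets of infinite reduced words via Lemma \ref{infinitewordsformtwo}/Proposition \ref{infinitewordschar}, and then apply the two halves of Theorem \ref{twistsemilattice} to get the meet- and join-semilattice properties. The paper's proof is just a terser version of exactly this argument.
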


\begin{proof}
Note that $\widetilde{\Phi}^+\backslash \widehat{\Psi^+}=\widehat{\Phi}\backslash \widehat{\Psi^+}=\widehat{\Psi^-}$.
Then this assertion follows from Theorem \ref{twistsemilattice} and Corollary \ref{infinitewordschar}.
\end{proof}

For an infinite Coxeter group we denote by $\overline{W}$ the union of $W$ and $W_l$ (set of infinite reduced words).

\begin{proposition}\label{nonlattice}
Let $\widetilde{W}$ be an
affine Weyl group (with the corresponding Weyl group $W$). If $B$ is a biclosed set (resp. biconvex set) in $\widetilde{\Phi}^+$ which is not of the form
$\Phi_w$ where $w\in \overline{\widetilde{W}}$, then
$(\widetilde{W},\leq_B)$ is not a semilattice.
\end{proposition}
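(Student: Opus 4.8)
The plan is to prove that $(\widetilde{W},\leq_B)$ is not a meet semilattice (and hence not a lattice); since $\leq_{\widehat{\Phi}\setminus B}$ is the order opposite to $\leq_B$ (as $\widehat{\Phi}\ominus B=\widehat{\Phi}\setminus B$ because $B\subseteq\widehat{\Phi}$), running the same argument with $\widehat{\Phi}\setminus B$ in place of $B$ gives the dual failure of the join semilattice property whenever $\widehat{\Phi}\setminus B$ is also not of the form $\Phi_w$, $w\in\overline{\widetilde{W}}$. Rather than exhibiting two incomparable maximal lower bounds, I will produce a pair of group elements with \emph{no} common lower bound at all, which is what actually occurs here (for instance already for $B=\widetilde{\Phi}^+$, where $\leq_B$ is the reverse weak order).

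The structural input is a single consequence of Dyer's classification. A biconvex set in $\widehat{\Phi}$ is in particular $2$-closure biclosed, so in either case $B$ is a $2$-closure biclosed set in $\widetilde{\Phi}^+=\widehat{\Phi}$ which by hypothesis is not the inversion set of any element of $\overline{\widetilde{W}}$. Proposition~\ref{infinitewordschar} together with the remark following it then supplies a root $\alpha\in\Phi$ and integers $k,t\geq 1$ with $\beta_1:=\alpha+k\delta\in B$ and $\beta_2:=-\alpha+t\delta\in B$. The elementary point I would record next is that no finite biclosed subset of $\widehat{\Phi}$ contains both $\beta_1$ and $\beta_2$: for every integer $c\geq 1$,
\[
c\beta_1+(c-1)\beta_2=\alpha+\bigl(ck+(c-1)t\bigr)\delta\in\widetilde{\Phi}^+,
\]
and these are pairwise distinct positive roots (the coefficient of $\delta$ is strictly increasing in $c$), so already the first stage of the $2$-closure of $\{\beta_1,\beta_2\}$ is infinite; hence any $2$-closure closed set containing $\beta_1$ and $\beta_2$ is infinite.

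Finally I would assemble the contradiction. Put $x=s_{\beta_1}$ and $y=s_{\beta_2}$ (note $x\neq y$ since $\beta_1\neq\pm\beta_2$); then $\beta_1\in\Phi_x$ and $\beta_2\in\Phi_y$, because a reflection inverts its own root. Suppose $u\in\widetilde{W}$ were a lower bound of both $x$ and $y$ in $\leq_B$. Using that $\leq_B$ and $\leq_B'$ coincide and Lemma~\ref{lem:symmdiff}, the relation $u\leq_B x$ gives $\Phi_x\setminus\Phi_u\subseteq\widehat{\Phi}\setminus B$; as $\beta_1\in\Phi_x\cap B$ this forces $\beta_1\in\Phi_u$, and symmetrically $\beta_2\in\Phi_u$. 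But $\Phi_u$ is a finite biclosed subset of $\widehat{\Phi}$, contradicting the previous paragraph. Hence $\{x,y\}$ has no lower bound and $(\widetilde{W},\leq_B)$ is not a meet semilattice. The only substantive step is extracting the opposite pair $\beta_1,\beta_2$ from the hypothesis — this is exactly the content of the remark after Proposition~\ref{infinitewordschar}, and it is the sole place where the affine hypothesis is used — after which the argument is a short formal deduction; I do not anticipate any further obstacle beyond checking that brief $2$-closure computation.
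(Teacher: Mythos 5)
Your proof is correct and follows essentially the same route as the paper's: extract the pair $\alpha+k\delta,\,-\alpha+t\delta\in B$ via the remark after Proposition~\ref{infinitewordschar}, and show that the two corresponding reflections admit no common lower bound because the (finite) inversion set of any such bound would have to contain both roots. The only difference is presentational: you spell out the $2$-closure computation showing that no finite biclosed set contains both roots, whereas the paper performs that computation inside the proof of Proposition~\ref{infinitewordschar} and merely invokes it.
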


\begin{proof}
If $B$ is not of the form $\Phi_w,w\in \overline{\widetilde{W}}$,
then there exists $\alpha+k\delta$ and $-\alpha+t\delta$ both in
$B$ by the remark succeeding Proposition \ref{infinitewordschar}. Then consider $s_{\alpha+k\delta}$ and $s_{-\alpha+t\delta}$.
Suppose that they admit a lower bound $z$. Then $\alpha+k\delta$ and
$-\alpha+t\delta$ cannot be both in $\Phi_z$. So say
$\alpha+k\delta$ is not. Then it is in
$\Phi_{s_{\alpha+k\delta}}\backslash \Phi_z\subset
\widetilde{\Phi}^+\backslash B$. This is a contradiction. The similar contradiction can be obtained if one assumes that $-\alpha+t\delta$ is not in $\Phi_z.$

For the assertion in terms of the biconvex sets, one only needs to notice that all those non-biconvex 2 closure biclosed sets in $\widehat{\Phi}$  are not inversion sets.
\end{proof}

Therefore we can conclude

\begin{theorem}\label{mainth}
Let $\widetilde{W}$ be an affine
Weyl group. The following statements about a biclosed (resp. biconvex) set $B$ in $\widetilde{\Phi}^+$ are equivalent:

(1) $B$ is the inversion set of an
infinite reduced word;

(2) $(\widetilde{W},\leq_B)$ is a
non-complete meet semilattice.
\end{theorem}

\begin{proof}
The equivalence of (1) and (2) follows from Proposition \ref{nonlattice} and Theorem \ref{twistsemilattice}.
\end{proof}

\begin{remark}
A corollary of the above theorem is that for affine Weyl groups, an infinite biclosed set $B$ in $\widehat{\Phi}$ is separable if and only if $\leq_B$ is either a non-complete meet semilattice or a non-complete join semilattice. To see this, one notes that if both $B$ and $\widehat{\Phi}\backslash B$ are not an inversion set then $\delta$ is in the cone closure of both $B$ and $\widehat{\Phi}\backslash B.$ Thus $B$ cannot be separable.
\end{remark}

\begin{remark}
Let $(W_i,S_i), i=1,2$ be two Coxeter systems with the root system and the positive system $\Phi_i, \Phi^+_i$ respectively. Let $W=W_1\times W_2, S=S_1\uplus S_2$. Then the Coxeter system $(W,S)$ has the root system $\Phi=\Phi_1\uplus \Phi_2$ and the positive system $\Phi^+=\Phi_1^+\uplus \Phi_2^+.$ Let $B$ be a biclosed set in $\Phi^+.$ Then $B=B_1\uplus B_2$ where $B_i=B\cap \Phi_i$. The twisted weak order $(W,\leq_B)$ is isomorphic to the poset product $(W_1,\leq_{B_1})\times (W_2, \leq_{B_2})$. Note that if the product poset is a meet semilattice then each one of the poset is also a meet semilattice and if each one of the poset is a meet semilattice then so is their product. Now suppose that for each $W_i$, $B_i$ is the inversion set of an infinite reduced word if and only if $(W_i,\leq_{B_i})$ is a non-complete meet semilattice. Then for $W$,  $B$ is the inversion set of an infinite reduced word if and only if $(W,\leq_{B})$ is a non-complete meet semilattice. Therefore the equivalence in Theorem \ref{mainth} holds for any affine Coxeter group.
\end{remark}

We conjecture that the same assertion holds for any arbitrary finite rank infinite Coxeter group, i.e. a biclosed set $B$ in the positive system is the inversion set of an infinite reduced word if and only if $(W,\leq_B)$ is a non-complete meet semilattice. The ``only if" part is true by above Theorem \ref{twistsemilattice}.

\section{Acknowledgement}

The author acknowledges the support from Guangdong  Natural Science Foundation  Project 2018A030313581.
Some results of the paper are based on part of the author's thesis. The author wishes to thank his advisor Matthew Dyer, who was aware of the twisted weak order while studying  the twisted Bruhat order, for his guidance. The author thanks the anonymous referees for their time and useful comments for improving the paper.

\bibliographystyle{plain}

\end{document}